\documentclass[reqno]{amsart}
\usepackage{graphicx,xcolor,cite,mathtools,bbold,bbding}
\usepackage{amssymb,amsmath,amsthm,mathrsfs,paralist,bm,esint,setspace}
\usepackage[ngerman,french,english]{babel}
\RequirePackage[colorlinks,citecolor=blue,urlcolor=blue]{hyperref}
\usepackage{cite}
\usepackage{cases}
\usepackage{tcolorbox}
\usepackage{tikz}
\usepackage{pgfplots}
\usepackage{subcaption}
\usetikzlibrary{arrows.meta}
\pgfplotsset{compat=1.14}
\pgfplotsset{every x tick label/.append style={font=\footnotesize, yshift=0.6ex}}
\pgfplotsset{every y tick label/.append style={font=\footnotesize, xshift=0.5ex}}

\DeclareMathOperator{\dimh}{dim_{_{\rm H}}}

\DeclareMathOperator{\dimmm}{\underline{dim}_{_{\rm M}}\!}

\newcommand{\N}{\mathbb{N}}
\newcommand{\Z}{\mathbb{Z}}

\newcommand{\R}{\mathbb{R}}

\renewcommand{\P}{\mathrm{P}}

\newcommand{\E}{\mathrm{E}}

\newcommand{\1}{\mathbb{1}}
\renewcommand{\d}{{\rm d}}

\newcommand{\e}{{\rm e}}

\renewcommand{\ge}{\geqslant}
\renewcommand{\le}{\leqslant}
\definecolor{CYL}{rgb}{0.3,0.1,0.1}
\definecolor{DK}{rgb}{0.5,0.3,0.5}
\author[D. Khoshnevisan and C.Y. Lee]{Davar Khoshnevisan \and Cheuk Yin Lee}
\address{Department of Mathematics, The University of Utah, Salt Lake City, Utah 84112-0090,
	USA}
\email{davar@math.utah.edu}
\address{School of Science and Engineering, The Chinese University of Hong Kong (Shenzhen), Longgang,
	Shenzhen, Guangdong, 518172, China}
\email{leecheukyin@cuhk.edu.cn}
\title[Slow points of fBm]{On the slow points of fractional Brownian motion}\thanks{
	Research supported in part by the National Science Foundation grant
        DMS-2245242 and the Shenzhen Peacock grant 2025TC0013.}
\newtheorem{stat}{Statement}[section]
\newtheorem{proposition}[stat]{Proposition}

\newtheorem{corollary}[stat]{Corollary}
\newtheorem{theorem}[stat]{Theorem}
\newtheorem{lemma}[stat]{Lemma}
\theoremstyle{definition}

\numberwithin{equation}{section}
\usepackage[cal=dutchcal,calscaled=1.05,
	scr=boondoxo,scrscaled=1.05,
	]{%
mathalfa}
\date{June 26, 2026}
\keywords{Fractional Brownian motion, slow points, Hausdorff dimension, lower Minkowski dimension}
\subjclass{60G15; 60G17}
\begin{document}
\maketitle
\setcounter{tocdepth}{3}
\let\oldtocsection=\tocsection
\let\oldtocsubsection=\tocsubsection
\let\oldtocsubsubsection=\tocsubsubsection

\begin{abstract}
	Esser and Loosveldt \cite{EsserLoosveldt} have recently resolved a long-standing open problem
	in the folklore by proving that fractional Brownian motion (fBm) has slow points in the sense of
	Kahane \cite{Kahane76}, following a rich theory of slow points developed for 
	Brownian motion and other, related, self-similar Markov processes \cite{Perkins,
	Davis,Dvoretzky,Kahane76,Kahane83,Marsalle,
	BassBurdzy,GreenwoodPerkinsB,GreenwoodPerkinsA,Verzani}. We presently introduce
	another method for the study of slow points in order to compute the Hausdorff dimension
	of fBm slow points. Our method follows recent ideas on the points of slow
	growth for SPDEs \cite{KL2} but also requires a number of new
	localization ideas that are likely to have other applications.
\end{abstract}

\section{Introduction}

Let us choose and fix a number $H\in(0\,,1)$, and recall \cite{Kolmogorov,Schoenberg,VS,MandelbrotVanNess}
that a stochastic process $B=\{B(t)\}_{t\ge0}$ is a \emph{fractional Brownian motion {\rm (fBm)} with index
$H$} if $B$ is a centered Gaussian process such that $B(0)=0$ and 
$\E(|B(t)-B(s)|^2)=|t-s|^{2H}$ for every $s,t\ge0$ . 
It is well known that, up to a modification, the random function $t\mapsto B(t)$ is
H\"older continuous of index $\gamma$ for every $\gamma\in(0\,,H)$; see  \cite{MandelbrotVanNess}.
This is sharp since
$\P\{\limsup_{h\to0^+}|B(t+h)-B(t)|/h^H=\infty\}=1$ for every $t>0$ thanks to scaling.

A point $t>0$ is said to
be a \emph{slow point} -- see Kahane \cite{Kahane76} -- if 
\[\textstyle
	0<\limsup_{h\to0^+} h^{-H} |B(t+h)-B(t)|<\infty\quad\text{with probability one}.
\]
Esser and Loosveldt \cite{EsserLoosveldt} have recently developed delicate wavelet techniques
which imply that $\inf_{t>0}\limsup_{h\to0^+}h^{-H}  |B(t+h)-B(t)| <\infty$ a.s.
This implies the existence of slow points because one can apply
a well-known argument  of Dvoretzky \cite{Dvoretzky} in order to see that
$\inf_{t>0}\limsup_{h\to0^+} h^{-H}|B(t+h)-B(t)| >0$ a.s.

The case $H=\frac12$ (Brownian motion) was studied earlier and has a particularly distinguished
history \cite{Perkins,Davis,Dvoretzky,Kahane76,Kahane83,BassBurdzy,GreenwoodPerkinsB,GreenwoodPerkinsA}.
In this case, the process is strong Markov, a fact that plays a key role
in most of the earlier studies of slow points. Our goal is to introduce a new technique for a 
more in-depth analysis of the slow points of self-similar
Gaussian processes. We specifically develop that method
here for fBm in order to simplify the exposition and leave other, potentially
interesting, generalizations to the interested reader.
Our effort also addresses a concern
of Esser and Loosveldt \cite{EsserLoosveldt} about the lack of other available 
methods for deeper analysis of fBm slow points by supplying another such method.

Davis \cite{Davis} and  Perkins \cite{Perkins} have
observed that a key first step in the study of slow points is
the development of boundary-crossing probabilities, 
found earlier independently by Breiman \cite{Breiman} and
Shepp \cite{Shepp}. Recently, we developed general boundary-crossing
probability estimates \cite{KL1} that imply the following for fBm:
There exists a strictly decreasing, continuous, 
function $\lambda: (0\,,\infty) \to (0\,,\infty)$ 
such that: $\lim_{\theta \to 0^+}\lambda(\theta) = \infty$;
$\lim_{\theta \to \infty} \lambda(\theta) = 0$; and
for every $\theta > 0$, 
\begin{equation}\label{BC}\textstyle
	\P\left\{ |B(s)| \le \theta s^{H}\quad \forall s\in[h\,,1]\right\} = 
	h^{\lambda(\theta)+\mathscr{o}(1)}\qquad\text{as $h\to0^+$}.
\end{equation}
The precise values of $\lambda$ are not known, except that it is known that 
$\lambda^{-1}(\theta)$ is the smallest positive root of a modified Kummer function
when $H=\frac12$ (Brownian motion); see Davis \cite{Davis} and Shepp \cite{Shepp}.
Moreover,
$\log\lambda(c)\asymp -c^2$ and
$\lambda(b)\asymp b^{-1/H}$ uniformly for all $c>1$ and $b\in(0\,,1)$; see 
Corollary 1.2 of Ref.~\cite{KL1}.

The purpose of this paper is to show how one can use \eqref{BC} in order
to study the slow points of fBm.
An analogous undertaking was taken up recently in \cite{KL2} in order to exhibit the existence of
a family of ``points of slow growth'' for stochastic PDEs.
Parts of the methods of the present paper draw on ideas in \cite{KL2}. Therefore, the bulk
of this effort is concerned with highlighting the portions of the arguments that are genuinely new.

The following is the main result of this paper.

\begin{theorem}\label{th:main}
	Choose and fix a compact set $K \subset (0\,,\infty)$.
	Then, almost surely,
	\begin{equation}\label{main}
		\lambda^{-1}( \dimmm K) \le  \adjustlimits\inf_{t \in K} 
		\limsup_{h \to 0^+} \frac{|B(t+h)-B(t)|}{h^H}
		\le \lambda^{-1}(\dimh K),
	\end{equation}
	where $\dimmm$ and $\dimh$ respectively denote 
	the lower Minkowski dimension and the Hausdorff dimension 
	{\rm\cite{Falconer}}.
\end{theorem}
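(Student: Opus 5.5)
The plan is the standard two-sided scheme for exceptional points: a first-moment covering argument for the lower bound and a second-moment (energy) argument for the upper bound. Both halves rest on the boundary-crossing estimate \eqref{BC}. For $t\in K$, $\theta>0$ and small $\varepsilon>0$, define the event
\[
	E_\varepsilon(t,\theta)=\left\{|B(t+s)-B(t)|\le \theta s^H\ \ \forall s\in[\varepsilon,1]\right\}.
\]
Since fBm has stationary increments, \eqref{BC} gives $\P(E_\varepsilon(t,\theta))=\varepsilon^{\lambda(\theta)+\mathscr{o}(1)}$ uniformly in $t$.

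\textbf{Lower bound.} Fix $\theta<\lambda^{-1}(\dimmm K)$, so that $\lambda(\theta)>\dimmm K$; recall that $\lambda$ is decreasing. Choose $\varepsilon_n\downarrow0$ geometrically, with $K$ covered by $N_n\le\varepsilon_n^{-\dimmm K-\delta}$ intervals of length $\varepsilon_n$, where $\delta>0$ is small. This is possible along a subsequence by the definition of the lower Minkowski dimension. If $t\in K$ is a point with $\limsup_h h^{-H}|B(t+h)-B(t)|<\theta'<\theta$, then $|B(t+s)-B(t)|\le\theta' s^H$ for all $s\le s_0(t)$. By scaling we may take $s_0=1$ after a countable reduction, and by compactness of $K$ we may restrict to a fixed finite range.

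Let $t_i$ be the left endpoint of the covering interval containing $t$. We pass from $t$ to $t_i$ using the modulus of continuity $|B(t)-B(t_i)|\lesssim\varepsilon_n^H\sqrt{\log(1/\varepsilon_n)}$. This error is too big compared with $\theta s^H$ at $s=\varepsilon_n$. So we instead impose the constraint only for $s\in[\varepsilon_n^{1-\eta},1]$, where the error is negligible. This costs replacing $\lambda(\theta)$ by $(1-\eta)\lambda(\theta)$, which is harmless once $\eta$ is small. A union bound then gives
\[
	\P(\cdots)\le N_n\,\varepsilon_n^{(1-\eta)\lambda(\theta)+\mathscr{o}(1)}\to0
\]
along the subsequence. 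Hence the infimum is at least $\theta'$, and letting $\theta'\uparrow\lambda^{-1}(\dimmm K)$ by continuity of $\lambda$ yields the left inequality.

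\textbf{Upper bound.} Fix $\theta>\lambda^{-1}(\dimh K)$, so that $\lambda(\theta)<\dimh K$. By Frostman's lemma, choose a probability measure $\mu$ on $K$ with finite $\gamma$-energy for some $\gamma\in(\lambda(\theta),\dimh K)$. Set
\[
	J_\varepsilon=\int_K \1_{E_\varepsilon(t,\theta)}\,\mu(\d t)\big/\P(E_\varepsilon).
\]
Then $\E J_\varepsilon=1$. The goal is the bound
\[
	\E J_\varepsilon^2\lesssim\iint|t-r|^{-\lambda(\theta)-\eta}\,\mu(\d t)\mu(\d r)<\infty,
\]
which would follow from the correlation estimate
\[
	\P\bigl(E_\varepsilon(t)\cap E_\varepsilon(r)\bigr)\lesssim \P(E_\varepsilon)^2\,|t-r|^{-\lambda(\theta)-\eta}.
\]
Given this, Paley--Zygmund gives $\P\{J_\varepsilon>0\}\ge c>0$ uniformly in $\varepsilon$. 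Compactness and continuity of $B$ then produce, with positive probability, a point $t\in K$ at which $|B(t+s)-B(t)|\le\theta s^H$ for all $s\in(0,1]$. A zero-one argument upgrades this to probability one: apply scaling and the ergodicity/independence of fBm increments at small scales, or replace $[\varepsilon,1]$ by $[\varepsilon,\rho]$ with $\rho\to0$ and use a tail-type law.

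\textbf{Main obstacle.} The correlation estimate is the hard step, because fBm increments over disjoint windows are not independent. This is the ``localization'' the abstract advertises. I would first decompose $B$ via the Mandelbrot--Van Ness representation into a part driven by white noise on a window near $t$ and a remainder that is smooth at the relevant scales. The near parts belonging to $t$ and $r$ over scales $s\in[\varepsilon,|t-r|/2]$ are then independent. Next, I would show that the smooth remainder perturbs the constraint only by $\theta\to\theta\pm\delta$ outside an event of tiny probability, using Gaussian concentration and the Lipschitz bounds of the far-field kernel. Finally, I would apply \eqref{BC} separately to each localized piece together with continuity of $\lambda$. The scales above $|t-r|$ are simply dropped, which costs the factor $|t-r|^{-\lambda}$.
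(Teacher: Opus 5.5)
Your lower bound is correct, and it is more elementary than the paper's. A first-moment union bound applied directly to $B$ needs only \eqref{BC}, stationarity of increments and scaling. Your cutoff $\varepsilon_n^{1-\eta}$ does the work of the paper's finer packing at scale $h_{n+1}^p$, and the paper's localization machinery (Lemma \ref{lem:BC:D}, Proposition \ref{pr:E:as}, the block decomposition) is not needed for that half.

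The upper bound has a genuine gap at the step you flag as the main obstacle: your sketch gives the wrong exponent. Write $E_{[a,b]}(t)$ for your event with $s$ restricted to $[a,b]$ and $d=|t-r|$. For the remainder to be $\mathscr{o}(h^H)$, the noise window attached to $t$ at scale $h$ must reach back a distance $h^\alpha\gg h$, as in \eqref{D_alpha}. With a purely forward window $[t,t+h]$, the contribution of the noise before time $t$ has standard deviation of order $h^H$ when $H\ne\frac12$. So, unlike the Brownian (Markov) argument, you cannot keep all scales at $r$ and truncate only at $t$. The event at $r$ at scale $h$ depends on noise in $[r-h^\alpha,r+h]$, which contains $t$ once $h^\alpha>d$. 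Independence therefore forces you to truncate both events below scale $d$ (indeed below $d^{1/\alpha}$), and then
\[
	\frac{\P(E_{[\varepsilon,d]}(t))\,\P(E_{[\varepsilon,d]}(r))}{\P(E_{[\varepsilon,1]})^{2}}\approx d^{-2\lambda(\theta)},
\]
not $d^{-\lambda(\theta)}$. The energy integral then converges only if $2\lambda(\theta)<\dimh K$, which yields $\lambda^{-1}(\tfrac12\dimh K)$ rather than $\lambda^{-1}(\dimh K)$. The paper never attempts a single-power estimate over a long window. It uses short windows $[h_1,h_2]$ with $h_1=h_2^q$, $q\downarrow1$. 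There the crude two-sided loss $(h_2/h_1)^{2\lambda(\theta)+2\delta}=h_2^{-2(q-1)(\lambda(\theta)+\delta)}$ is beaten by the Frostman mass $h_2^{\alpha\nu}$ of $\{|t-s|\le 2h_2^\alpha\}$. It then combines the windows using continuity of $D_\alpha$ and compactness.

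Two further steps also fail as written.

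First, you cannot get $\E J_\varepsilon^2$ bounded uniformly in $\varepsilon$ this way. The estimate \eqref{BC} identifies probabilities only up to factors $\varepsilon^{\pm\mathscr{o}(1)}$. Your perturbation $\theta\to\theta\pm\delta$ changes them by $\varepsilon^{\lambda(\theta\pm\delta)-\lambda(\theta)}$. Neither factor cancels in $\P(E_\varepsilon(t)\cap E_\varepsilon(r))/\P(E_\varepsilon)^2$, so applying \eqref{BC} to each piece gives no uniform bound. The additive error $\P\{\text{remainder}>\delta\}$ is also a problem. It is of order one at top scale $1$, where the remainder is as large as the increment. For a top scale $\rho$ it does not decrease with $\varepsilon$, so it is not negligible against $\P(E_\varepsilon)^2\approx\varepsilon^{2\lambda(\theta)}$. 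The paper avoids these issues in three ways:
\begin{itemize}
\item It defines the events through the exactly localized $\tilde D_\alpha$, so no threshold shift enters the second moment; Proposition \ref{pr:E:as} is used only at the end, at the level of the $\limsup$.
\item It uses Royen's Gaussian correlation inequality, $\P(A(h_1,r,u))\le\P(A(h_1,h_2,u))/\P(A(r,h_2,u))$, which removes the dependence on the smallest scale exactly.
\item It imposes $h_2^q\le h_1$ in Lemma \ref{lem:BC:D}, so the localization error is negligible in the one-point estimate.
\end{itemize}

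Second, your zero-one upgrade is unjustified. The event $\{\inf_{t\in K}\limsup_{h\to0^+}h^{-H}|B(t+h)-B(t)|\le\theta\}$ is neither a tail nor a germ event. The $\sigma$-field $\bigcap_{\rho>0}\sigma\{B(t+s)-B(t):t\in K,\,s\in[0,\rho]\}$ contains, when $K$ is an interval, every increment of $B$ inside $K$. Ergodicity in time says nothing about a fixed $K$, and scaling moves $K$. The paper obtains probability one directly: $\E(X^2_{\varepsilon_{m+1},\varepsilon_m})\to1$, so Paley--Zygmund makes $\P\{X_{\varepsilon_{m+1},\varepsilon_m}=0\}$ summable and Borel--Cantelli applies.
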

For every $\theta>0$ let $\mathfrak{S}(\theta)$ denote the collection of all \emph{$\theta$-slow points}
of fractional Brownian motion. That is, let us consider the random set
$\mathfrak{S}(\theta)$, defined as the collection of all times $t>0$ such that
$\limsup_{h\to0^+} h^{-H}  |B(t+h)-B(t)| \le \theta$.
Thanks to scaling and Fubini's theorem, 
$\mathfrak{S}(\theta)$ has a.s.~zero Lebesgue measure.
The following is a ready consequence of Theorem \ref{th:main} and is presented
here without proof
since one can prove the next result using the same ideas that appear
already in \cite{KL2}.

\begin{corollary}
	$\dimh \mathfrak{S}(\theta)=1-\lambda(\theta)$ a.s.~for every
	non-random  $\theta>0$, where
	$\dimh A<0$ means that $A=\varnothing$.
\end{corollary}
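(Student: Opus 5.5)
The statement to prove is the Corollary: $\dimh \mathfrak{S}(\theta)=1-\lambda(\theta)$ a.s. The plan derives both bounds from Theorem~\ref{th:main}, applied to suitable compact sets, in the style of the hitting-probability and codimension arguments of \cite{KL2}. Throughout, write $L(t)=\limsup_{h\to0^+}h^{-H}|B(t+h)-B(t)|$, so that $\mathfrak{S}(\theta)=\{t>0: L(t)\le\theta\}$.

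\emph{Upper bound.} Fix $\varepsilon>0$ and an interval $[a,b]\subset(0,\infty)$. I would use a covering argument built directly on \eqref{BC} together with stationarity of increments and scaling. Cover $[a,b]$ by intervals $I$ of length $n^{-1}$. The event that $I$ meets $\mathfrak{S}(\theta)$ is contained, up to a localization, in the event
\[
|B(t_I+s)-B(t_I)|\le(\theta+\varepsilon)s^H \quad\text{for all } s\in[n^{-1+\delta},\,\eta].
\]
Here $t_I$ is the left endpoint of $I$, and the localization is that the Hölder modulus of $B$ over $I$ is at most $n^{-H+\delta'}$, which is negligible against $s^H$ with $s\ge n^{-1+\delta}$. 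By \eqref{BC} and scaling, this event has probability $n^{-(1-\delta)\lambda(\theta+\varepsilon)+\mathscr{o}(1)}$. Summing over the $n$ intervals gives an expected number of covering intervals of order $n^{1-(1-\delta)\lambda(\theta+\varepsilon)+\mathscr{o}(1)}$. A Borel--Cantelli argument along $n=2^k$ then bounds the lower Minkowski dimension, hence the Hausdorff dimension, of $\mathfrak{S}(\theta)\cap[a,b]$ from above. Letting $\varepsilon,\delta\to0$ and using continuity of $\lambda$ gives the upper bound. The event $\limsup\le\theta$ only holds for $h\le\eta(t)$ with $\eta$ random, so I would take a countable union over $\eta=1/m$. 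If $\lambda(\theta)>1$, the same count gives emptiness.

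\emph{Lower bound.} I would use a codimension argument. Let $\Lambda_\beta$ be an independent random fractal, such as a fractal percolation set, with codimension $\beta$: it hits a fixed set $A$ with positive probability if $\dimh A>\beta$ and a.s. misses $A$ if $\dimh A<\beta$. Take $\beta<1-\lambda(\theta)$. Then $K=\Lambda_\beta\cap[a,b]$ has $\dimh K=\dimmm K=1-\beta>\lambda(\theta)$ with positive probability. Since $\lambda$ is strictly decreasing, $\lambda^{-1}(\dimh K)<\theta$. Apply the upper inequality of Theorem~\ref{th:main} conditionally on $K$, which is legitimate because $K$ is independent of $B$. This gives $\inf_{t\in K}L(t)<\theta$, so some $t\in K$ has $L(t)<\theta$, and $K\cap\mathfrak{S}(\theta)\ne\varnothing$ with positive probability. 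By the codimension property, $\dimh\mathfrak{S}(\theta)\ge\beta$ with positive probability.

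\emph{Upgrading to almost sure.} The main obstacle is passing from positive probability to probability one. Here I would use a zero-one law: $\dimh(\mathfrak{S}(\theta)\cap(0,\epsilon))$ is measurable with respect to the germ $\sigma$-field at $0$, modulo tail issues. Alternatively, I would use scaling together with the fact that the events concerning disjoint far-apart intervals $[2^{-k},2^{-k+1}]$ are asymptotically independent, since fBm increments decorrelate under scaling. Either route gives an a.s. lower bound of $\beta$, and letting $\beta\uparrow1-\lambda(\theta)$ completes the proof.
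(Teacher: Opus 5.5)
Your upper bound is correct and takes a different route from the paper. You use a first-moment covering: the Hölder modulus to pass to the left endpoint of each interval, \eqref{BC} with stationarity of increments and scaling, Borel--Cantelli along $n=2^k$, and a countable union over $\eta=1/m$. This needs only \eqref{BC} and continuity. The paper instead presents the corollary as a consequence of Theorem~\ref{th:main}, so that direction would come from the left inequality in \eqref{main}, applied to an independent random set of upper Minkowski dimension $1-\beta<\lambda(\theta)$. The codimension step of your lower bound is also correct.

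The gap is the final step, which you call the main obstacle but do not carry out. As written, you only obtain $\P\{\dimh\mathfrak{S}(\theta)\ge\beta\}>0$. Neither of your suggested tools is available off the shelf:
\begin{itemize}
\item fBm is not Markov, so there is no Blumenthal law to quote. From \eqref{fbm:rep} one only sees that the germ $\sigma$-field $\bigcap_{\epsilon>0}\sigma\{B(s):s\le\epsilon\}$ sits inside the completed $\sigma$-field of the noise $\xi$, which is not trivial. A germ zero-one law would therefore need its own proof.
\item ``Asymptotic independence'' of the events attached to $[2^{-k},2^{-k+1}]$ does not by itself give probability one. You would additionally need ergodicity of the scaling flow (for example, mixing of the Lamperti transform $\e^{-Hs}B(\e^s)$) and then the ergodic theorem.
\end{itemize}

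In fact no zero-one law is needed. Theorem~\ref{th:main} holds almost surely for each fixed $K$, so the only loss of probability in your argument is that $\Lambda_\beta\cap[a,b]$ may be empty. The fix is as follows.
\begin{itemize}
\item Replace $\Lambda_\beta$ by $\Lambda=\bigcup_{j\ge1}\Lambda_\beta^{(j)}$, a union of countably many independent copies, all independent of $B$.
\item By countable subadditivity, $\Lambda$ still a.s.\ misses every fixed set of Hausdorff dimension $<\beta$.
\item Almost surely some copy survives in $[a,b]$, so $\dimh(\Lambda\cap[a,b])\ge1-\beta>\lambda(\theta)$ a.s.
\item Conditioning on $\Lambda$ and using the right inequality in \eqref{main} gives $\Lambda\cap\mathfrak{S}(\theta)\ne\varnothing$ a.s.
\item Conditioning on $B$ then yields $\P\{\dimh\mathfrak{S}(\theta)\ge\beta\}=1$.
\end{itemize}
Alternatively, Proposition~\ref{pr:E:as} shows that $\mathfrak{S}(\theta)\cap[a,b]$ agrees a.s.\ with a set determined by $\tilde\xi$ on $[a-\epsilon,b+\epsilon]$. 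This gives exact independence across well-separated translates of $[a,b]$. Together with stationarity of increments, that makes your independence idea rigorous.
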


Throughout, $\|X\|_k$ denotes the $L^k(\P)$-norm of a random variable $X$
for every real number $k \ge 1$,
and $\log_+(x) := \log(x \vee \e)$ for every $x > 0$.

\section{Localization}
A central portion of the proof of Theorem \ref{th:main} has to do with the
fact that the increments of fBm are ``approximately independent,''
suitably interpreted. The results of this section set the stage for clarifying
what exactly ``approximate independence'' means in the present context.
As such, we believe that the following might have  other potential uses as well.

Recall from Mandelbrot and Van Ness \cite{MandelbrotVanNess} the following representation for $B$:
\begin{align}\label{fbm:rep}
	B(t) = K_H \left[ \int_0^\infty \left[ (t+s)^{H-\frac12} 
	- s^{H-\frac12} \right] \d \xi(s) + \int_0^t (t-s)^{H-\frac12}\, \d \tilde\xi(s) \right],
\end{align}
where $\xi$ and $\tilde\xi$ denote independent white noises on $\R_+$ and $K_H$ is a normalizing constant.
Consequently, for all $t>0$ and $h > 0$,
\begin{align*}
	&B(t+h) - B(t) = K_H \int_0^\infty \left[ (t+h+s)^{H-\frac12} 
	- (t+s)^{H-\frac12} \right] \d \xi(s)\\
	& + K_H \int_0^t \left[ (t+h-s)^{H-\frac12} - (t-s)^{H-\frac12} \right] \d \tilde\xi(s) 
		+ K_H \int_t^{t+h} (t+h-s)^{H-\frac12}\, \d \tilde\xi(s).
\end{align*}
Let us define for every $\alpha \in (0\,,1)$, $t>0$, and $h \in (0\,,t^{1/\alpha}]$, 
the following localized version of the above increment:
\begin{equation}\label{D_alpha}\begin{aligned}
	D_\alpha(t\,,h) &= K_H \int_{t-h^\alpha}^t \left[ (t+h-s)^{H-\frac12} - (t-s)^{H-\frac12} \right]
		\d \tilde\xi(s)\\
	& \hskip2in + K_H \int_t^{t+h} (t+h-s)^{H-\frac12}\, \d \tilde\xi(s);
\end{aligned}\end{equation}
and consider  the localization error:
\begin{align}\label{error}
	\mathcal{E}_\alpha(t\,,h) = B(t+h)-B(t) - D_\alpha(t\,,h)
	\qquad\forall t>0,\, h \in (0\,, t^{1/\alpha}].
\end{align}
Note that the localized increment $D_\alpha(t\,,h)$ depends only on 
$\{\tilde\xi(s)\}_{s \in [t-h^\alpha, t+h]}$.
Also, note that when $H=\frac12$, we have
$D_\alpha(t\,,h) = B(t+h) - B(t)$ and $\mathcal{E}_\alpha \equiv 0$.

\begin{lemma}\label{lem:D-D:t}
	For every $\alpha \in (0\,,1)$ and $t_1>t_0>0$, 
	\[
		\left\| D_\alpha(t\,,h)-D_\alpha(t',h) \right\|_2 
		\lesssim \begin{cases} 
		|t-t'|^H & \text{if $H \in (0\,,\frac12]$,}\\
		(|t-t'|+h^\alpha)^{H-\frac12} |t-t'|^{\frac12} & \text{if $H \in (\frac12\,,1)$,}
		\end{cases}
	\]
	uniformly for all $t, t' \in [t_0\,,t_1]$ and $h \in (0\,,t_0^{1/\alpha}]$.
\end{lemma}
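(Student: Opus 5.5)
Without loss of generality let $t<t'$ and write $\delta=t'-t$. Since $D_\alpha(t,h)$ is a Wiener integral against $\tilde\xi$, we have $D_\alpha(t,h)=K_H\int g_t(s)\,\d\tilde\xi(s)$ with
\[
	g_t(s)=\bigl[(t+h-s)_+^{H-\frac12}-(t-s)_+^{H-\frac12}\bigr]\1_{[t-h^\alpha,\,t+h]}(s),
\]
under the convention $0^{H-\frac12}_+\cdot\1=0$ for $s>t$ in the second power. The Wiener isometry then gives $\|D_\alpha(t,h)-D_\alpha(t',h)\|_2^2=K_H^2\int_\R (g_t-g_{t'})^2\,\d s$. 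The change of variables $u=t-s$ (respectively $u=t'-s$) shows that $g_{t'}(s)=g_t(s-\delta)$, i.e.\ $g_{t'}$ is a translate of $g_t$ by $\delta$. Writing $\phi(u)=[(u+h)^{H-\frac12}-u_+^{H-\frac12}]\1_{[-h,h^\alpha]}(u)$, the task reduces to bounding the $L^2$-modulus of continuity $\int|\phi(u+\delta)-\phi(u)|^2\,\d u$. Note that the bound is uniform in $t,t'$ automatically, since only $\delta$ enters.

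\emph{Case $H\le\frac12$.} Let $\psi(u)=(u+h)_+^{H-\frac12}-u_+^{H-\frac12}$ be the untruncated kernel. Its translation modulus satisfies $\int|\psi(u+\delta)-\psi(u)|^2\d u\lesssim\delta^{2H}$, because $\psi(u+\delta)-\psi(u)$ is the difference of two translation increments of $v\mapsto v_+^{H-\frac12}$, each of which has squared $L^2$ norm $c\,\delta^{2H}$ by scaling (this is exactly the computation behind $\E|B(t')-B(t)|^2=\delta^{2H}$ for Riemann--Liouville type kernels). The truncation $\1_{[-h,h^\alpha]}$ then contributes only a boundary term at the left edge $u=h^\alpha$, namely $\int_{h^\alpha-\delta}^{h^\alpha}|\psi|^2\,\d u$ (plus a symmetric term). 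Since $|\psi(u)|\lesssim h\,u^{H-\frac32}$ for $u\ge h^\alpha\ge h$, this piece is at most $\delta\, h^2 h^{\alpha(2H-3)}$. This quantity needs to be compared with $\delta^{2H}$. When $\delta\le h^\alpha$ we have $\delta^{1-2H}\le h^{\alpha(1-2H)}$ and $h^{2-2\alpha}\le1$, so the term is $\lesssim\delta^{2H}$. When $\delta>h^\alpha$, I bound the boundary piece crudely by $\|\psi\1_{u\ge h^\alpha-\delta}\|_2^2\le\|\psi\|_2^2\lesssim h^{2H}\le\delta^{2H}$. This case therefore looks routine.

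\emph{Case $H>\frac12$.} Now $\phi$ is bounded, with $|\phi|\lesssim h^{H-\frac12}$ near $u\in[-h,h]$. For $u\ge h$ we have $|\phi(u)|\lesssim h\,u^{H-\frac32}$, and the total size is at most $\lesssim (u+h)^{H-\frac12}$. I split the integral of $|\phi(\cdot+\delta)-\phi|^2$ into three parts.

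(i) The interior, where both points lie in $[-h,h^\alpha]$. Here I use the derivative bound $|\phi'(u)|\lesssim u^{H-\frac32}\wedge\ldots$, or more simply the same translation argument as above, to obtain $\lesssim\delta^{2H}$. Since $\delta^{2H}=\delta^{2H-1}\delta\le(\delta+h^\alpha)^{2H-1}\delta$, this is acceptable.

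(ii) The two jump regions near the truncation endpoints, each of length $\delta$. At $u=h^\alpha$ the jump has size $|\phi(h^\alpha)|\le(h^\alpha+h)^{H-\frac12}\lesssim(\delta+h^\alpha)^{H-\frac12}$, so this region contributes $\delta(\delta+h^\alpha)^{2H-1}$. On a window of length $\delta$ the values of $\phi$ are $\lesssim(\delta+h^\alpha)^{H-\frac12}$ as well, so the region near $u=-h$ is controlled in the same way.

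This endpoint jump is exactly the source of the factor $(|t-t'|+h^\alpha)^{H-\frac12}|t-t'|^{1/2}$. The main obstacle is making the bookkeeping in (i) and (ii) uniform across the regimes $\delta\lessgtr h$ and $\delta\lessgtr h^\alpha$. The key estimate to verify is that the sup of $|\phi|$ over a $\delta$-window touching $[-h,h^\alpha]$ is $\lesssim(\delta+h^\alpha)^{H-\frac12}$, which holds since $\phi\ge0$ is bounded by $(u+h)^{H-\frac12}$ there.
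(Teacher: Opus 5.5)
Your argument is correct after one small repair, described in the second paragraph, and it takes a different route from the paper.

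\textbf{Comparison with the paper.} The paper never uses stationarity in $t$. It splits according to whether the supports $[t-h^\alpha,t+h]$ and $[t'-h^\alpha,t'+h]$ overlap, and writes the difference as $I_1-I_2-I_3$. In the overlapping case it dominates $I_1$ by the fBm increment $\|B(t'+h)-B(t+h)\|_2^2=\delta^{2H}$. It then estimates separately the strip integral $I_2$ of the kernel $(t+h-s)^{H-\frac12}$ and the difference $I_3$ of the truncated kernels $(t-s)^{H-\frac12}$, with a further split according to whether $\delta$ exceeds $h^\alpha$, using $x^p-y^p\le(x-y)^p$ or the mean value theorem. Because the two power kernels are handled separately, each strip costs $\delta h^{\alpha(2H-1)}$ when $H>\frac12$; this is where the factor $(\delta+h^\alpha)^{H-\frac12}$ comes from.

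Your reduction to the $L^2$-modulus of one translated kernel is shorter and keeps the cancellation inside $\psi$. It is legitimate because $t-h^\alpha\ge t_0-h^\alpha\ge0$, so all kernels live on $\R_+$, where $\tilde\xi$ is defined; say this explicitly. The bookkeeping you flag as the main obstacle also disappears. Since $\psi=0$ on $(-\infty,-h)$, you have $\phi=\psi\1_{(-\infty,h^\alpha]}$, and hence exactly
\[
	\int_\R|\phi(u+\delta)-\phi(u)|^2\,\d u\le\int_\R|\psi(u+\delta)-\psi(u)|^2\,\d u+\int_{h^\alpha-\delta}^{h^\alpha}\psi(u)^2\,\d u .
\]
So there is no second jump at $u=-h$ and no symmetric term. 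For $H>\frac12$, the bound $0\le\psi\le h^{H-\frac12}$ then gives $\lesssim\delta^{2H}+\delta h^{2H-1}\lesssim\delta(\delta+h)^{2H-1}$ with no regime splitting. This is slightly stronger than the stated bound.

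\textbf{The repair (case $H\le\frac12$).} The strip $[h^\alpha-\delta,h^\alpha]$ lies below $h^\alpha$, not above it. The estimate $|\psi(u)|\le|H-\frac12|\,h\,u^{H-\frac32}$ gives $|\psi|\lesssim h\,h^{\alpha(H-\frac32)}$ on the strip only when, say, $\delta\le h^\alpha/2$. For $\delta$ near $h^\alpha$ the strip reaches $u\approx0$, where $|\psi(u)|\asymp u^{H-\frac12}$, and your intermediate bound $\delta h^2h^{\alpha(2H-3)}$ is false there. Concretely, for $H<\frac12$ and $\delta=h^\alpha$ the strip integral is $\gtrsim h^{2H}$, whereas $\delta h^2h^{\alpha(2H-3)}=h^{2H}\,h^{(1-\alpha)(2-2H)}=\mathscr{o}(h^{2H})$. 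The fix is to move the threshold to $h^\alpha/2$. For $\delta>h^\alpha/2$, use your crude bound $\|\psi\|_2^2\asymp h^{2H}\lesssim\delta^{2H}$. This is valid because $\delta\gtrsim h^\alpha\gtrsim h$ for $h\le t_0^{1/\alpha}$, with constants depending on $t_0$. The same fact gives $h^{2-2\alpha}\lesssim1$ without assuming $h\le1$.
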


\begin{proof}
	Without loss of generality, we may and will assume that $t_0 \le t < t' \le t_1$. 
	We now consider the lemma in two different cases.\\
	
	Case 1: $t'-h^\alpha \le t+h$. In this case, we may write
	$D_\alpha(t',h) - D_\alpha(t\,,h) = I_1 - I_2 - I_3$,
	where
	\begin{align*}
		&I_1 = K_H \int_{t'-h^\alpha}^{t'+h} (t'+h-s)^{H-\frac12}\,
			\d \tilde\xi(s) - K_H \int_{t'-h^\alpha}^{t+h} (t+h-s)^{H-\frac12}\, \d \tilde\xi(s),\\
		&I_2 = K_H \int_{t-h^\alpha}^{t'-h^\alpha} (t+h-s)^{H-\frac12}\, \d \tilde\xi(s),\\
		&I_3 = K_H \int_{t'-h^\alpha}^{t'} (t'-s)^{H-\frac12}\, \d \tilde\xi(s) 
			- K_H \int_{t-h^\alpha}^{t} (t-s)^{H-\frac12}\, \d \tilde\xi(s).
	\end{align*}
	It follows from the representation \eqref{fbm:rep} that
	\begin{align*}
		\|I_1\|_2^2 \le \|B(t'+h)-B(t+h)\|_2^2 = |t'-t|^{2H}.
	\end{align*}
	Next, we may observe that
	\begin{align*}
		\|I_2\|_2^2 &= K_H^2 \int_{t-h^\alpha}^{t'-h^\alpha} (t+h-s)^{2H-1} \,\d s
		\lesssim (h+h^\alpha)^{2H} - (h+h^\alpha - (t'-t))^{2H}\\
		&\lesssim \begin{cases}
		(t'-t)^{2H}, & \text{if $H \in (0\,,\frac12]$,}\\
		(h+h^\alpha)^{2H-1} (t'-t) \lesssim h^{\alpha(2H-1)}(t'-t), & \text{if $H \in (\frac12\,,1)$,}
		\end{cases}
	\end{align*}
	uniformly for all $t < t'$ in $[t_0\,,t_1]$ and $h \in (0\,,t_0^{1/\alpha}]$.
	We have used the elementary inequality, $x^p- y^p \le (x-y)^p$, valid for all $x > y>0$ 
	and $p \in (0\,,1)$ -- for the case that $H\in (0\,,\frac12]$ -- and 
	the mean value theorem for the case that $H \in (\frac12\,,1)$.

	As regards $I_3$, we offer
	\begin{align*}
		\|I_3\|_2^2 &\lesssim 
		\begin{cases}
			\int_{t'-h^\alpha}^{t'} (t'-s)^{2H-1}\, \d s+ 
				\int_{t-h^\alpha}^{t} (t-s)^{2H-1}\, \d s & \text{if $t \le t'-h^\alpha$;}\medskip\\
			\int_{t}^{t'} (t'-s)^{2H-1} \,\d s+ \int_{t-h^\alpha}^{t'-h^\alpha} (t-s)^{2H-1} \,\d s & \\
				\qquad
				+ \int_{t'-h^\alpha}^t [(t'-s)^{H-\frac12} - (t-s)^{H-\frac12}]^2 \, \d s 
				& \text{if $t > t'-h^\alpha$}
		\end{cases}\\[5pt]
		&\lesssim 
		\begin{cases}
			h^{2H\alpha} & \text{if $t \le t'-h^\alpha$;}\\
			(t'-t)^{2H} + (h^\alpha + t'-t)^{2H} - (h^\alpha)^{2H} + 
				\|B(t')-B(t)\|_2^2& \text{if $t > t'-h^\alpha$}
		\end{cases}\\[5pt]
		& \lesssim \begin{cases}
			(t'-t)^{2H} & \text{if $h^\alpha \le t'-t$;}\\
			(t'-t)^{2H} + h^{\alpha(2H-1)}(t'-t) & \text{if $t'-t < h^\alpha$,}
		\end{cases}
	\end{align*}
	uniformly for all $t < t'$ in $[t_0\,,t_1]$ and $h \in (0\,,t_0^{1/\alpha}]$,
	where we have used the mean value theorem to obtain the last inequality for the second case.
	It follows that
	\begin{align}\label{D-D:t:I_3}
		\|I_3\|_2^2 \lesssim 
		\begin{cases}
		|t'-t|^{2H} & \text{if $H\in (0\,,\frac12]$,}\\
		(|t'-t| + h^\alpha)^{2H-1} |t'-t| & \text{if $H \in (\frac12\,,1)$,}
		\end{cases}
	\end{align}
	uniformly for all $t < t'$ in $[t_0\,,t_1]$ and $h \in (0\,,t_0^{1/\alpha}]$.
	We combine the estimates for $I_1$, $I_2$, and $I_3$ to deduce the desired result
	in Case 1.\\
	
	Case 2: $t'-h^\alpha > t+h$. In this case,
	$D_\alpha(t'\,,h) - D_\alpha(t\,,h) = I_1 - I_2 - I_3$,
	where
	\begin{align*}
		&I_1 = K_H \int_{t'-h^\alpha}^{t'+h} (t'+h-s)^{H-\frac12}\, \d \tilde\xi(s),\qquad
			I_2 = K_H \int_{t-h^\alpha}^{t+h} (t+h-s)^{H-\frac12}\, \d \tilde\xi(s),\\
		&I_3 = K_H \int_{t'-h^\alpha}^{t'} (t'-s)^{H-\frac12}\, \d \tilde\xi(s) - 
			K_H \int_{t-h^\alpha}^{t} (t-s)^{H-\frac12}\, \d \tilde\xi(s).
	\end{align*}
	In particular, 
	$\|I_1\|_2^2 + \|I_2\|_2^2 \lesssim (h+h^\alpha)^{2H} \le (t'-t)^{2H},$
	and \eqref{D-D:t:I_3} holds just as it did in Case 1, all valid uniformly for
	all $t < t'$ in $[t_0\,,t_1]$ and $h \in (0\,,t_0^{1/\alpha}]$.
	This completes the proof.
\end{proof}

\begin{lemma}\label{lem:D-D:eps}
	For every $\alpha \in (0\,,1)$ and $t_1>t_0>0$, 
	\(
		\|D_\alpha(t\,,h) - D_\alpha(t\,,h')\|_2 \lesssim |h-h'|^{\alpha(H \wedge \frac12)},
	\)
	uniformly for all $t \in [t_0\,,t_1]$ and $h,h' \in (0\,,t_0^{1/\alpha}]$.
\end{lemma}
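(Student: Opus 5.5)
Assume without loss of generality that $0<h<h'\le t_0^{1/\alpha}$, and set $\delta=h'-h$. Since $D_\alpha(t\,,h)$ is a stochastic integral against $\tilde\xi$ with a deterministic kernel, the It\^o isometry gives
\[
	\|D_\alpha(t\,,h)-D_\alpha(t\,,h')\|_2^2 = K_H^2\int_\R \left[ f_h(s)-f_{h'}(s)\right]^2\d s,
\]
where $f_h(s)=[(t+h-s)^{H-\frac12}-(t-s)^{H-\frac12}]\1_{[t-h^\alpha,t)}(s) + (t+h-s)^{H-\frac12}\1_{[t,t+h)}(s)$. The task is therefore a deterministic estimate of the $L^2$ distance between $f_h$ and $f_{h'}$. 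It should be uniform in $t$, and indeed $t$ only enters through translation.

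I split the difference into three pieces, according to where $s$ lies.

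\textbf{The moving left endpoint.} On $[t-h'^\alpha,t-h^\alpha)$ only $f_{h'}$ is nonzero. Its square equals $[(t+h'-s)^{H-\frac12}-(t-s)^{H-\frac12}]^2$, and this is bounded by a constant uniformly there: both arguments are at least $h^\alpha\ge h'$ and at most $2t_0$, so the mean value theorem gives a bound $\lesssim h'\,(h^\alpha)^{H-\frac32}$, which is a crude but adequate route. A cleaner alternative is to bound the whole contribution by $\int_{t-h'^\alpha}^{t-h^\alpha}(\ldots)^2\,\d s \lesssim \int_{t-h'^\alpha}^{t-h^\alpha}\big[(t-s)^{2H-1}+(t+h'-s)^{2H-1}\big]\d s$. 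This is $\lesssim |h'^{\alpha}-h^{\alpha}|^{2H\wedge1}$ up to constants, using $x^p-y^p\le(x-y)^p$ for $p=2H\le1$, and boundedness of the integrand (the arguments are bounded by $2t_0$) when $2H>1$. Since $h'^\alpha-h^\alpha\le\delta^\alpha$ for $\alpha\in(0\,,1)$, this piece is $\lesssim\delta^{\alpha(2H\wedge1)}$.

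\textbf{The common interval.} On $[t-h^\alpha,t+h)$ both kernels are present, and the difference is $(t+h'-s)^{H-\frac12}-(t+h-s)^{H-\frac12}$; the $(t-s)^{H-\frac12}$ terms cancel when $s<t$. The square integral is at most
\[
	\int_{-\infty}^{t+h}\left[(t+h'-s)^{H-\frac12}-(t+h-s)^{H-\frac12}\right]^2\d s ,
\]
restricted to the part above $t-h^\alpha$. The key is to compare it with the variance of a single fBm increment, $\|B(t+h')-B(t+h)\|_2^2=\delta^{2H}$, which dominates that integral by \eqref{fbm:rep}, exactly as in the treatment of $I_1$ in Lemma \ref{lem:D-D:t}. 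This gives $\lesssim\delta^{2H}$.

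\textbf{The leftover interval.} On $[t+h,t+h')$ only $f_{h'}$ survives, and $\int_{t+h}^{t+h'}(t+h'-s)^{2H-1}\d s\asymp\delta^{2H}$.

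Combining the three pieces gives a total of $\lesssim\delta^{2H}+\delta^{\alpha(2H\wedge1)}$. Because $\delta\le t_0^{1/\alpha}$ is bounded and $\alpha<1$, we have $\delta^{2H}\lesssim\delta^{\alpha(2H\wedge1)}$. Taking square roots yields $\|D_\alpha(t\,,h)-D_\alpha(t\,,h')\|_2\lesssim\delta^{\alpha(H\wedge\frac12)}$, which is the claim.

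The main obstacle is the first piece, the moving truncation endpoint. That is where the weaker exponent $\alpha(H\wedge\frac12)$ comes from. It requires the uniform bounds on the kernel for $s$ in a region whose distance from $t$ is between $h^\alpha$ and $h'^\alpha$, together with the subadditivity inequality $h'^\alpha-h^\alpha\le(h'-h)^\alpha$. I would first try the crude bound: the squared kernel is integrable near $s=t$ with primitive $(t-s)^{2H}$, and it is bounded when $H>\frac12$. I would check that each case gives exactly exponent $2H\wedge1$ in $h'^\alpha-h^\alpha$.
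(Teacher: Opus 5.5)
Your proof is correct and follows essentially the paper's argument: your common-interval and leftover pieces together form the paper's $I_1$, which is dominated by $\|B(t+h')-B(t+h)\|_2^2=|h'-h|^{2H}$. Your moving-endpoint piece, after $(a-b)^2\le 2a^2+2b^2$, is the paper's $\|I_2\|_2^2+\|I_3\|_2^2$, handled by the same tools: $x^p-y^p\le(x-y)^p$ (or boundedness of the integrand when $H>\frac12$) and $h'^\alpha-h^\alpha\le(h'-h)^\alpha$.

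Drop your ``crude route'' aside, however. The claim $h^\alpha\ge h'$ fails when $h\ll h'$, and the bound $h'(h^\alpha)^{H-\frac32}$ is not uniform in $h$. For $H<\frac12$ the integrand $[(t+h'-s)^{H-\frac12}-(t-s)^{H-\frac12}]^2$ is not even bounded near $s=t-h^\alpha$. Only your ``cleaner alternative'' carries the estimate.
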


\begin{proof}
	Without loss of generality, we may and will assume that $0<h<h' \le t_0^{1/\alpha}$. 
	Then, we write $D_\alpha(t\,,h') - D_\alpha(t\,,h) = I_1 + I_2 - I_3$, where
	\begin{align*}
		&I_1 = K_H \int_{t-h^\alpha}^{t+h'} (t+h'-s)^{H-\frac12}\, \d \tilde\xi(s) 
			- K_H \int_{t-h^\alpha}^{t+h} (t+h-s)^{H-\frac12}\, \d \tilde\xi(s),\\
		&I_2 = K_H \int_{t-h'^\alpha}^{t-h^\alpha} (t+h'-s)^{H-\frac12}\, \d \tilde\xi(s),
			\qquad
			I_3 = K_H \int_{t-h'^\alpha}^{t-h^\alpha} (t-s)^{H-\frac12}\, \d \tilde\xi(s).
	\end{align*}
	From \eqref{fbm:rep}, we see that
	$\|I_1\|_2^2 \le \|B(t+h')-B(t+h)\|_2^2 = |h'-h|^{2H}$
	uniformly for all $t \in [t_0\,,t_1]$ and $h \in (0\,,t_0^{1/\alpha}]$.
	Next, we observe that
	\begin{align*}
		\|I_2\|_2^2 &= K_H^2 \int_{t-h'^\alpha}^{t-h^\alpha} (t+h'-s)^{2H-1} \,\d s
		\lesssim (h' + h'^\alpha)^{2H} - (h' + h^\alpha)^{2H}\\
		& \lesssim \begin{cases}
			(h'^\alpha - h^\alpha)^{2H} \le |h'-h|^{2H\alpha}
				& \text{if $H \in (0\,,\frac12]$,}\\
			|h'^\alpha-h^\alpha| (h'+h'^\alpha)^{2H-1} \lesssim |h'-h|^\alpha
				& \text{if $H \in (\frac12\,,1)$,}
		\end{cases}
	\end{align*}
	uniformly for all $t \in [t_0\,,t_1]$ and $h \in (0\,,t_0^{1/\alpha}]$.
	In the case that $H \in (\frac12\,,1)$,
	the last inequality hinges on the mean value theorem;
	and, in both cases, the last inequality hinges also on the fact that 
	$x^p- y^p \le (x-y)^p$ for all $x > y>0$ 
	and $p \in (0\,,1)$. Likewise,
	\[
		\|I_3\|_2^2 
		= K_H^2 \int_{t-h'^\alpha}^{t-h^\alpha} (t-s)^{2H-1} \,\d s \lesssim h'^{2H\alpha} - h^{2H\alpha}
		\lesssim \begin{cases}
			|h'-h|^{2H\alpha}& \text{if $H \in (0\,,\frac12]$,}\\
			|h'-h|^\alpha& \text{if $H \in (\frac12 \,,1)$,}
		\end{cases}
	\]
	uniformly for all $t \in [t_0\,,t_1]$ and $h \in (0\,,t_0^{1/\alpha}]$.
	Combine to conclude the proof.
\end{proof}

\begin{lemma}\label{lem:E}
	Choose and fix $t_0 > 0$ and $\alpha \in (0\,,1)$,
	and set $\beta = \beta(H\,,\alpha) := H+(1-H)(1-\alpha)$. Then,
	$\|\mathcal{E}_\alpha(t\,,h)\|_k 
	\lesssim \sqrt{k}\,h^{\beta}$ uniformly for all $k\in[2\,,\infty)$, 
	$t\ge t_0$, and $h \in (0\,, t_0^{1/\alpha}]$.
	Consequently, there exists a constant $c>0$ such that 
	$\sup_{t \ge t_0}\sup_{h \in (0,t_0^{1/\alpha}]}
	\E\exp(ch^{-2\beta}|\mathcal{E}_\alpha(t\,,h)|^2) < \infty$.
\end{lemma}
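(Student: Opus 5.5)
Since $\mathcal{E}_\alpha(t\,,h)$ is a centered Gaussian random variable, for every $k\ge2$ we have $\|\mathcal{E}_\alpha(t\,,h)\|_k\lesssim\sqrt{k}\,\|\mathcal{E}_\alpha(t\,,h)\|_2$ with an absolute implied constant. The first assertion therefore reduces to the variance bound $\|\mathcal{E}_\alpha(t\,,h)\|_2\lesssim h^\beta$, uniformly in $t\ge t_0$ and $h\in(0\,,t_0^{1/\alpha}]$. The exponential-moment statement also follows from Gaussianity: if $\mathrm{Var}(X)\le C h^{2\beta}$, then $\E\exp(cX^2h^{-2\beta})\le(1-2cC)^{-1/2}<\infty$ for every $c<1/(2C)$, uniformly in the parameters. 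Alternatively one can expand the exponential in a power series and use the moment bound $\|X\|_k\lesssim\sqrt{k}\,h^\beta$.

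To get the variance bound, I subtract \eqref{D_alpha} from the displayed formula for $B(t+h)-B(t)$. Writing $f_h(u)=(u+h)^{H-\frac12}-u^{H-\frac12}$, this gives
\[
	\mathcal{E}_\alpha(t\,,h)=K_H\int_0^\infty f_h(t+s)\,\d\xi(s)+K_H\int_0^{t-h^\alpha}f_h(t-s)\,\d\tilde\xi(s).
\]
The two terms are independent, so by the It\^o--Wiener isometry
\[
	\|\mathcal{E}_\alpha(t\,,h)\|_2^2=K_H^2\int_t^\infty f_h(u)^2\,\d u+K_H^2\int_{h^\alpha}^{t}f_h(u)^2\,\d u
	\le 2K_H^2\int_{h^\alpha}^\infty f_h(u)^2\,\d u,
\]
where the inequality uses $t\ge t_0\ge h^\alpha$. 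This bound is uniform in $t$.

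The main step is estimating this tail integral. Since $u\ge h^\alpha\ge h$ on the range of integration (because $h\le t_0^{1/\alpha}$ and I may take $t_0\le1$ or adjust constants), the mean value theorem gives $|f_h(u)|\lesssim h\,u^{H-\frac32}$. Hence
\[
	\int_{h^\alpha}^\infty f_h(u)^2\,\d u\lesssim h^2\int_{h^\alpha}^\infty u^{2H-3}\,\d u\asymp h^{2}h^{\alpha(2H-2)}=h^{2[1-\alpha(1-H)]}.
\]
Note that $1-\alpha(1-H)=H+(1-H)(1-\alpha)=\beta$, so this is exactly $h^{2\beta}$. The only delicacy is the comparison $h\le h^\alpha$, i.e.\ $h\le1$, which justifies applying the mean value theorem uniformly on $[h^\alpha,\infty)$. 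If $t_0>1$, then $h$ can exceed $1$, but it stays in the compact range $h\in[1\,,t_0^{1/\alpha}]$. On that range the variance is bounded by $\lesssim h^{2H}$ via $\|I\|_2\le\|B(t+h)-B(t)\|_2+\|D_\alpha(t\,,h)\|_2$, and $h^{2H}\asymp h^{2\beta}$ there because $h$ is bounded above and below. So the estimate holds on the whole range with a constant depending on $t_0$.
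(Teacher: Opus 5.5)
Your proof is correct and is essentially the paper's argument. It uses the same split of $\mathcal{E}_\alpha(t\,,h)$ into the $\xi$-integral and the $\tilde\xi$-integral over $[0\,,t-h^\alpha]$, the Wiener isometry, the mean-value bound $|f_h(u)|\lesssim h\,u^{H-3/2}$ integrated over $[h^\alpha,\infty)$, and Gaussianity for the $L^k$ and exponential-moment bounds. Your observation that the two variances together are controlled by $K_H^2\int_{h^\alpha}^\infty f_h(u)^2\,\d u$ is a little tidier than the paper's separate $h^2$ bound for the $\xi$-part followed by $\beta<1$.

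Your case split for $h>1$ is unnecessary, though not wrong. Since $|f_h(u)|=|H-\tfrac12|\int_0^h(u+r)^{H-3/2}\,\d r\le|H-\tfrac12|\,h\,u^{H-3/2}$ for all $u,h>0$, the mean-value bound needs no comparison between $h$ and $h^\alpha$ and covers the whole range at once.
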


\begin{proof}
	For every $t \ge t_0$ and $h\in (0\,,t_0^{1/\alpha}]$, 
	\begin{align*}
		\mathcal{E}_\alpha(t\,,h)  &=K_H \int_0^\infty \left[(t+h+s)^{H-\frac12} 
			- (t+s)^{H-\frac12}\right] \d \xi(s)\\
		& \quad + K_H \int_0^{t-h^\alpha} \left[ (t+h-s)^{H-\frac12} 
			- (t-s)^{H-\frac12} \right] \d \tilde\xi(s) =: I_1 + I_2.
	\end{align*}
	A change of variables and the fundamental theorem of calculus together yield
	\begin{align*}
		&\|I_1\|_2^2 = K_H^2 \int_0^\infty \left[ (t+h+s)^{H-\frac12} 
			- (t+s)^{H-\frac12} \right]^2 \d s\\
		& \propto \int_t^\infty \left[ (h+s)^{H-\frac12} 
			- s^{H-\frac12} \right]^2 \d s = 
			h^{2H} \int_{t/h}^\infty \left[ (1+s)^{H-\frac12} 
			- s^{H-\frac12} \right]^2 \d s\\
		& \propto h^{2H} \int_{t/h}^\infty \left| \int_0^1 (r+s)^{H-3/2}\, \d r \right|^2 \d s 
			\le h^{2H} \int_{t/h}^\infty s^{2H-3}\, \d s
			\propto h^{2H}(t/h)^{2H-2}\lesssim h^2,
	\end{align*}
	uniformly for all $t \ge t_0$ and $h\in (0\,,t_0^{1/\alpha}]$.
	Similarly,
	\begin{align*}
		\|I_2\|_2^2 &= K_H^2 \int_0^{t-h^\alpha} \left[ 
			(t+h-s)^{H-\frac12} - (t-s)^{H-\frac12} \right]^2 \d s\\
		&\le K_H^2 \int_{h^\alpha}^\infty \left[ (h+s)^{H-\frac12} 
			- s^{H-\frac12} \right]^2 \d s
			\lesssim h^{2H} (h^{\alpha-1})^{2H-2} = h^{2\beta},
	\end{align*}
	uniformly for all $t \ge t_0$ and $h \in (0\,,t_0^{1/\alpha}]$. 
	Combine the above estimates and use the fact that
	$\beta < 1$ in order to obtain 
	$\|\mathcal{E}_\alpha(t\,,h)\|_2 \lesssim h^\beta$ uniformly 
	for all $t \ge t_0$ and $h \in (0\,,t_0^{1/\alpha}]$.
	Since $\mathcal{E}_\alpha(t\,,h)$ is Gaussian, the first assertion follows.
	This completes the proof since the second assertion follows from the first assertion 
	together with the Taylor expansion of the exponential function 
	and Stirling's formula.
\end{proof}

\begin{lemma}\label{lem:E-E}
	Choose and fix $t_1 > t_0 > 0$ and $\alpha \in (0\,,1)$, and define
	\[
		\rho((t\,,h)\,,(t',h')) =
		|t-t'|^{H\wedge \frac12} + |h-h'|^{\alpha (H \wedge \frac12)}.
	\]
	Then, $\|\mathcal{E}_\alpha(t\,,h) - \mathcal{E}_\alpha(t',h')\|_2 \lesssim \rho((t\,,h)\,,(t',h'))$
	uniformly for all $t,t' \in [t_0\,,t_1]$ and $h,h' \in (0\,,t_0^{1/\alpha}]$.
	In particular, there exists a constant $c>0$ such that
	\begin{align*}
		\E \exp\left( \sup_{\substack{(t,h),(t',h') \in [t_0,t_1]
		\times (0, t_0^{1/\alpha}]:\\ (t,h)\ne (t',h')}}
		\frac{c\left|\mathcal{E}_\alpha(t\,,h)-\mathcal{E}_\alpha(t',h') \right|^2}{%
		\phi\left( \rho((t\,,h)\,,(t',h'))\right)}
		\right) < \infty,
	\end{align*}
	where $\phi(r) = r^2\log_+(1/r)$ for every $r>0$.
\end{lemma}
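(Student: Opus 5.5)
The plan has two parts. First we prove the $L^2$ increment bound using only what is already available. Then we upgrade it to the exponential-moment bound with a Gaussian chaining/Garsia-type argument.

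\textbf{The $L^2$ bound.} By \eqref{error},
\[
\mathcal{E}_\alpha(t,h)-\mathcal{E}_\alpha(t',h') = [B(t+h)-B(t'+h')] - [B(t)-B(t')] - [D_\alpha(t,h)-D_\alpha(t',h')].
\]
We treat the three brackets separately.

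For the fBm terms, the stationary increments give
\[
\|B(t+h)-B(t'+h')\|_2\le |t-t'|^H+|h-h'|^H .
\]
Since $|t-t'|$ and $|h-h'|$ are bounded on the compact parameter set and $\alpha(H\wedge\frac12)\le H$ and $H\wedge\frac12\le H$, this is $\lesssim\rho$. The term $B(t)-B(t')$ is handled the same way.

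For the $D_\alpha$ term we split through $(t',h)$:
\[
D_\alpha(t,h)-D_\alpha(t',h') = [D_\alpha(t,h)-D_\alpha(t',h)]+[D_\alpha(t',h)-D_\alpha(t',h')].
\]
Lemma \ref{lem:D-D:eps} bounds the second bracket by $|h-h'|^{\alpha(H\wedge\frac12)}$. Lemma \ref{lem:D-D:t} bounds the first by $|t-t'|^H$ when $H\le\frac12$. When $H>\frac12$ it gives $(|t-t'|+h^\alpha)^{H-\frac12}|t-t'|^{1/2}\lesssim |t-t'|^{1/2}$, because $|t-t'|+h^\alpha\le t_1+t_0$ is bounded. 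In either case the first bracket is $\lesssim |t-t'|^{H\wedge\frac12}$, which proves the first assertion.

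\textbf{The exponential bound.} The process $\mathcal{E}_\alpha$ is a centered Gaussian field indexed by the two-dimensional set $T=[t_0,t_1]\times(0,t_0^{1/\alpha}]$. Its canonical metric satisfies $d\lesssim\rho$, and $\rho$ is a sum of powers of coordinate distances. So $T$ is covered by $\lesssim \varepsilon^{-1/(H\wedge\frac12)-1/(\alpha(H\wedge\frac12))}$ $\rho$-balls of radius $\varepsilon$, which is polynomial entropy. Dudley's chaining, or the Garsia–Rodemich–Rumsey inequality with $\Psi(x)=\e^{x^2}$ and $p(r)=r$ in $\rho$-scale, then yields a modulus of continuity of the form
\[
|\mathcal{E}_\alpha(x)-\mathcal{E}_\alpha(y)|\le Z\,\rho(x,y)\sqrt{\log_+(1/\rho(x,y))}
\quad\text{for all } x,y\in T,
\]
where $Z$ is a random variable with Gaussian tails, i.e.\ $\E\e^{cZ^2}<\infty$. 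This is the standard route used in \cite{KL2}. Squaring the display and dividing by $\phi(\rho)$ shows the supremum is $\le Z^2$, hence exponentially integrable.

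\textbf{Main obstacle.} The only care needed is with large $\rho$, where $\log_+(1/\rho)=1$, and with getting a uniform modulus, not just a bound for a.e.\ pair. For large $\rho$ we bound the ratio by the supremum of $|\mathcal{E}_\alpha|$ over $T$. Lemma \ref{lem:E} controls each point, but the supremum needs the same chaining bound. We can obtain it by chaining from a fixed base point and using $\|\mathcal{E}_\alpha\|_2\lesssim1$, which follows from Lemma \ref{lem:E}. The sub-Gaussian tail of the supremum then comes from Borell–TIS applied to the ratio field. To carry this out, we extend that field to pairs $(x,y)$ and note that its variance is bounded by $1/\log_+(1/\rho)\le 1$.
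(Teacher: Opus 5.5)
Your proposal is correct and follows the paper's proof. The $L^2$ bound comes from the triangle inequality together with Lemmas \ref{lem:D-D:t} and \ref{lem:D-D:eps}; you additionally spell out the elementary fBm-increment terms and the boundedness of $|t-t'|+h^\alpha$ when $H>\frac12$. The exponential-moment bound is the same ``standard metric entropy argument'' the paper invokes, and your chaining/Borell--TIS treatment of the ratio field, whose variance is at most $\lesssim 1/\log_+(1/\rho)\le 1$, is a valid way to fill in that step.
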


\begin{proof}
	It follows readily from the triangle inequality and Lemmas \ref{lem:D-D:t} and \ref{lem:D-D:eps}
	that $\|\mathcal{E}_\alpha(t\,,h) - \mathcal{E}_\alpha(t',h')\|_2
	\lesssim |t-t'|^{H \wedge \frac12} + |h-h'|^{\alpha(H\wedge \frac12)}$
	uniformly for all $t, t' \in [t_0\,,t_1]$ and $h,h' \in (0\,,t_0^{1/\alpha}]$.
	This proves the first assertion.
	The second assertion follows from the first and a standard metric entropy argument.
\end{proof}

\begin{lemma}\label{lem:supE}
	Choose and fix $\alpha \in (0\,,1)$ and $t_1 > t_0>0$,
	and recall $\beta=\beta(\alpha\,,H)$ from Lemma \ref{lem:E}.
	Then,
	\begin{align*}
		\left\| \textstyle\sup_{t \in [t_0,t_1]} 
		\sup_{h \in (0,b]} h^{-H} |\mathcal{E}_\alpha(t\,,h)| \right\|_k 
		\lesssim \sqrt{k}\, b^{\beta-H} |\log b|^{1/2},
	\end{align*}
	uniformly for all $k \in [1\,,\infty)$ and $b \in (0\,,b_0]$, where 
	$b_0 = t_0^{1/\alpha}\wedge \e^{-1}$.
\end{lemma}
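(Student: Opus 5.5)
A dyadic decomposition in $h$ combined with the Gaussian estimates of Lemmas \ref{lem:E} and \ref{lem:E-E} should give the bound. Fix $b\in(0\,,b_0]$ and split $(0\,,b]$ into the dyadic blocks $J_n=(2^{-n-1}b\,,2^{-n}b]$, $n\ge0$. On $J_n$ we have $h^{-H}\le 2^{(n+1)H}b^{-H}$. Hence
\[
	\sup_{t\in[t_0,t_1]}\sup_{h\in(0,b]} h^{-H}|\mathcal{E}_\alpha(t\,,h)|
	\le 2^H b^{-H}\sup_{n\ge0} 2^{nH} M_n,
	\qquad M_n:=\sup_{t\in[t_0,t_1]}\sup_{h\in J_n}|\mathcal{E}_\alpha(t\,,h)|.
\]
It therefore suffices to show that $\|M_n\|_k\lesssim \sqrt{k}\,(2^{-n}b)^{\beta}\sqrt{\log(2^n/b)}$ uniformly in $n$, $k$, and $b$. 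We then pass from individual blocks to the supremum over $n$, and since $\beta>H$ the geometric factor $2^{-n(\beta-H)}$ will absorb both the union over $n$ and the logarithm's growth in $n$.

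To bound $M_n$, write $\epsilon=2^{-n}b$ and $M_n\le \sup_{t,h}|\mathcal{E}_\alpha(t\,,h)-\mathcal{E}_\alpha(t_*,h_*)|+|\mathcal{E}_\alpha(t_*,h_*)|$ for a fixed reference point $(t_*,h_*)\in[t_0,t_1]\times J_n$. The second term has $L^k$ norm at most $\sqrt{k}\,\epsilon^\beta$ by Lemma \ref{lem:E}. The first term is where the difficulty lies. Over all of $[t_0,t_1]\times J_n$ the $\rho$-diameter is of order one, because $|t-t'|$ can be of order one, so Lemma \ref{lem:E-E} alone only gives an $O(1)$ bound and not $\epsilon^\beta$. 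I would therefore cover $[t_0,t_1]$ by $N_n\asymp \epsilon^{-\gamma}$ intervals of length $\delta=\epsilon^{\gamma}$, with $\gamma$ chosen so that the $\rho$-diameter of each cell $I\times J_n$, namely $\delta^{H\wedge\frac12}+\epsilon^{\alpha(H\wedge\frac12)}$, is at most a constant times $\epsilon^\beta$.

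This choice is the main obstacle. The $h$-direction contributes $\epsilon^{\alpha(H\wedge\frac12)}$, which need not be $\lesssim\epsilon^\beta$. If it is not, I would further subdivide $J_n$ into $\asymp\epsilon^{-\gamma'}$ subintervals, so that each cell has $\rho$-diameter $\lesssim\epsilon^\beta$ and the total number of cells is polynomial in $1/\epsilon$. On each cell, Dudley's entropy bound (the Gaussian chaining behind Lemma \ref{lem:E-E}) gives oscillation with $L^k$ norm $\lesssim\sqrt{k}\,\epsilon^\beta\sqrt{\log(1/\epsilon)}$. Adding Lemma \ref{lem:E} at one point of each cell and taking a maximum over polynomially many cells costs an extra factor $\sqrt{\log(1/\epsilon)}$, for instance via $\|\max_{i\le N}X_i\|_k\lesssim(\sqrt{k}+\sqrt{\log N})\sigma$ for Gaussians with standard deviation $\le\sigma$. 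Since $\log(1/\epsilon)=n\log2+|\log b|$, this yields $\|M_n\|_k\lesssim \sqrt{k}\,\epsilon^\beta\sqrt{n+|\log b|}$.

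Finally, bound the supremum over $n$ in $L^k$ by the sum: $\|\sup_n 2^{nH}M_n\|_k\le\sum_n 2^{nH}\|M_n\|_k\lesssim\sqrt{k}\,b^\beta\sum_n 2^{-n(\beta-H)}\sqrt{n+|\log b|}$. The series is $\lesssim|\log b|^{1/2}$ because $\beta-H=(1-H)(1-\alpha)>0$ and $|\log b|\ge1$ for $b\le\e^{-1}$. Multiplying by $2^H b^{-H}$ gives $\sqrt{k}\,b^{\beta-H}|\log b|^{1/2}$. The case $k\in[1,2)$ follows from $k=2$ by monotonicity of $L^k$ norms.
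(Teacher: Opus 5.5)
Your proposal is correct and is essentially the paper's argument: a grid whose cells have $\rho$-diameter $\asymp\epsilon^\beta$, a union bound over grid points using the Gaussian tails from Lemma~\ref{lem:E}, the modulus of continuity from Lemma~\ref{lem:E-E} for the oscillation, and a geometric decomposition in $h$ summed using $\beta>H$. Two differences are only cosmetic: the paper's mesh, $n^{-1/(H\wedge\frac12)}$ in $t$ and $n^{-1/(\alpha(H\wedge\frac12))}$ in $h$ with $n\asymp b^{-\beta}$, is exactly your cell size, and the paper first bounds the unweighted supremum over $h\in(0,b]$ before using $\e$-adic blocks. The one step to state carefully is that the oscillation must be controlled simultaneously over all cells by the single Gaussian-tailed variable of Lemma~\ref{lem:E-E}, as the paper does; bounding it cell by cell at size $\epsilon^\beta\sqrt{\log(1/\epsilon)}$ and then taking a union bound over the polynomially many cells would lose an extra factor $\sqrt{\log(1/\epsilon)}$.
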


\begin{proof}
	Choose and fix $\alpha \in (0\,,1)$ and $t_1>t_0>0$.
	Set $\gamma = H \wedge \frac12$ and, for every $n\in\N$,
	define $F_n$ to be the collection of
	all $2$-D points of the form $ ( in^{-1/\gamma} , jn^{-1/(\alpha\gamma)})$,
	as $i$ and $j$ each range over $\Z_+$.
	Then, 
	\[\textstyle
		\P\left\{ \sup_{t\in [t_0,t_1]} \sup_{h \in (0,b]} 
		|\mathcal{E}_\alpha(t\,,h)| \ge z \right\} \le P_1 + P_2
		\qquad\forall z>0,
	\]
	where 
	\begin{align*}
		P_1 &\textstyle = \P\left\{ \max_{(t,h)\in F_n \cap [t_0,t_1]\times (0,b]} 
			|\mathcal{E}_\alpha(t\,,h)| \ge z/2 \right\},\\
		P_2 &\textstyle= \P\left\{ \sup_{\substack{t, t' \in [t_0,t_1]:\\ |t-t'| \le n^{-1/\gamma}}}
			\sup_{\substack{h,h' \in (0,b]:\\|h-h'| \le n^{-1/(\alpha\gamma)}}} 
			|\mathcal{E}_\alpha(t\,,h) - \mathcal{E}_\alpha(t',h')| \ge z/2 \right\}.
	\end{align*}
	Then, Boole's inequality and Lemma \ref{lem:E} together imply that there exists $c_1>0$ such that
	\begin{align*}
		P_1 &\textstyle \le |F_n \cap [t_0\,,t_1] \times (0\,,b]| 
			\cdot \sup_{t \in [t_0,t_1]} \sup_{h \in (0,b]} 
			\P\left\{ |\mathcal{E}_\alpha(t\,,h)| \ge z/2\right\}\\
		&\le c_1^{-1} n^{\frac{1}{\gamma}+\frac{1}{\alpha\gamma}} \e^{ -c_1 z^2/b^{2\beta}},
	\end{align*}
	uniformly for all $z > 0$, $b \in (0\,,b_0]$ and $n \in \N$, where $|\cdots|$ denotes cardinality.
	By Lemma \ref{lem:E-E}, there exists $c_2>0$ such that
	\begin{align*}
		P_2 &\le \P\left\{ \sup_{\substack{t, t' \in [t_0,t_1]:\\ 
			|t-t'| \le n^{-\frac{1}{\gamma}}}} \sup_{\substack{
			h,h' \in (0,b]:\\|h-h'| \le n^{-\frac{1}{\alpha\gamma}}}} 
			\frac{|\mathcal{E}_\alpha(t\,,h) - \mathcal{E}_\alpha(t',h')|}{
			\rho((t\,,h),(t',h'))\sqrt{\log_+\frac{1}{\rho((t\,,h),(t',h'))}}}
			\ge \frac{z/2}{\frac{2}{n} \sqrt{\log_+ \frac{n}{2}}} \right\}\\
		& \le c_2^{-1} \exp\left( - \frac{c_2 z^2 n^2}{\log_+ n} \right),
	\end{align*}
	uniformly for all $z>0$, $b \in (0\,,b_0]$ and $n \in \N$.
	Let us write
	$L_b = b^\beta \sqrt{|\log b|}$ for every $b \in (0\,,b_0]$.
	It follows that
	\begin{align*}
		&\E\left[ \sup_{t\in [t_0,t_1]} \sup_{h \in (0,b]} 
			|\mathcal{E}_\alpha(t\,,h)|^k \right]
			= k \int_0^\infty z^{k-1} \P\left\{ \sup_{t\in [t_0,t_1]}
			\sup_{h \in (0,b]} |\mathcal{E}_\alpha(t\,,h)| \ge z \right\} \d z\\
		& \le (AL_b)^k + k\int_{AL_b}^\infty z^{k-1} P_1\, \d z + k\int_{AL_b}^\infty z^{k-1} P_2\, \d z
			= : (AL_b)^k + I_1 + I_2,
	\end{align*}
	uniformly for all $(k\,,b\,,A) \in [1\,,\infty) \times (0\,,b_0] \times (0\,,\infty)$.
	We plug in the above estimate for $P_1$ and use a change of variable to deduce that
	\begin{align*}
		&I_1 \lesssim k n^{\frac{1}{\gamma}+\frac{1}{\alpha\gamma}} b^{c_1A^2/2}
			\int_{AL_b}^\infty z^{k-1} \e^{-c_1 z^2/(2b^{2\beta})}\, \d z\\
		&\le k n^{\frac{1}{\gamma}+\frac{1}{\alpha\gamma}} b^{c_1A^2/2} (2b^{2\beta}/c_1)^{\frac{k}{2}} \int_0^\infty y^{\frac{k}{2}-1} \e^{-y}\, \d y
		= (2/c_1)^{\frac{k}{2}} k \Gamma(k/2) n^{\frac{1}{\gamma}+\frac{1}{\alpha\gamma}} b^{\beta k+c_1A^2/2},
	\end{align*}
	uniformly for all $(k\,,b\,,A\,,n) \in [1\,,\infty) \times (0\,,b_0] \times (0\,,\infty) \times \N$.
	Similarly, the above estimate for $P_2$ implies that
	\begin{align*}
		I_2 &\lesssim k \int_{AL_b}^\infty z^{k-1} \e^{-c_2 z^2 n^2 / \log n}\, \d z\\
		& \le k \left( \frac{\log_+ n}{c_2 n^2} \right)^{k/2} 
			\int_{0}^\infty y^{\frac{k}{2}-1} \e^{-y}\, \d y
		= c_2^{-k/2} k \Gamma(k/2) \left( \frac{\sqrt{\log_+ n}}{n} \right)^k,
	\end{align*}
	uniformly for all $(k\,,b\,,A) \in [1\,,\infty) \times (0\,,b_0] \times (0\,,\infty)$.
	There exists $c_0 > 1$ such that $L_b n/\sqrt{\log_+ n} \ge 1$ 
	for $n = n_b = \lfloor c_0 b^{-\beta} \rfloor$. 
	Set $A = \sqrt{2\beta(1+1/\alpha)/(\gamma c_1)}$.
	Then, there exists $C>0$ such that
	\begin{align*}
		&\textstyle\left\|\sup_{t\in [t_0,t_1]} \sup_{h \in (0,b]} |\mathcal{E}_\alpha(t\,,h)| \right\|_k\\
		&\le \left[ (AL_b)^k + C^k \Gamma(k/2) b^{\beta k + 
			c_1 A^2/2 - \frac{\beta}{\gamma}(1+\frac{1}{\alpha})} + C^k \Gamma(k/2) L_b^k \right]^{1/k}
			\lesssim \sqrt{k}\, b^{\beta} |\log b|^{1/2},
	\end{align*}
	uniformly for all $k \in [1\,,\infty)$ and $b \in (0\,,b_0]$.
	This implies that
	\begin{align*}
		\left\|\sup_{t\in [t_0,t_1]} \sup_{h \in (\e^{-m},\e^{-m+1}]} 
			\frac{|\mathcal{E}_\alpha(t\,,h)|}{h^H} \right\|_k 
		&\le \e^{Hm} \left\|\sup_{t\in [t_0,t_1]} \sup_{h \in (0,\e^{-m+1}]}
			|\mathcal{E}_\alpha(t\,,h)| \right\|_k\\
		& \lesssim \sqrt{km} \, \e^{-(\beta-H)m},
	\end{align*}
	uniformly for all $(k\,,m) \in [1\,,\infty) \times \N$. 
	Note that $\beta>H$.
	For every $b \in (0\,,b_0]$, we can find $n \in \N$ such that 
	$b \in (\e^{-n},\e^{-n+1}]$. Thus, we have
	\begin{align*}
		&\left\|\sup_{t\in [t_0,t_1]} \sup_{h \in (0,b]} 
			\frac{|\mathcal{E}_\alpha(t\,,h)|}{h^H} \right\|_k 
			\le \sum_{m=n}^\infty \left\|\sup_{t\in [t_0,t_1]}
			\sup_{h \in (\e^{-m},\e^{-m+1}]} \frac{|\mathcal{E}_\alpha(t\,,h)|}{h^H} \right\|_k\\
		&\lesssim \sqrt{k} \sum_{m=n}^\infty \sqrt{m}\, \e^{-(\beta-H)m}
			\lesssim \sqrt{kn}\, \e^{-(\beta-H)n}\lesssim \sqrt{k}\, b^{\beta-H} |\log b|^{1/2},
	\end{align*}
	uniformly for all $k \in [1\,,\infty)$. This completes the proof.
\end{proof}

Lemma \ref{lem:supE} and the Borel-Cantelli lemma 
readily imply the following.

\begin{proposition}\label{pr:E:as}
	For every $\alpha \in (0\,,1)$ and $t_1>t_0>0$, the following holds a.s.:
	\[\textstyle
		\sup_{t \in [t_0,t_1]} |B(t+h)-B(t) - D_\alpha(t\,,h)| = 
		\mathscr{o}(h^H) \quad \text{as $h \to 0^+$.}
	\]
\end{proposition}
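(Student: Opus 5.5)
We derive the proposition from Lemma \ref{lem:supE} via a Chebyshev bound along a geometric sequence of scales, followed by the Borel--Cantelli lemma and monotonicity in $b$. Throughout, $\alpha\in(0\,,1)$ and $t_1>t_0>0$ are fixed, and $\beta=\beta(H\,,\alpha)$ is as in Lemma \ref{lem:E}. Since $\beta-H=(1-H)(1-\alpha)>0$, the bound in Lemma \ref{lem:supE} tends to zero as $b\to0^+$.

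For every $b\in(0\,,b_0]$ set
\[
	M(b) := \sup_{t\in[t_0,t_1]}\sup_{h\in(0,b]} h^{-H}|\mathcal{E}_\alpha(t\,,h)|,
\]
which is nondecreasing in $b$. Fix $\varepsilon>0$ and consider the scales $b_m:=\e^{-m}$ for all integers $m$ large enough that $b_m\le b_0$. Chebyshev's inequality with $k=2$, together with Lemma \ref{lem:supE}, gives
\[
	\P\{M(b_m)>\varepsilon\}\le \varepsilon^{-2}\|M(b_m)\|_2^2 \lesssim \varepsilon^{-2}\, m\,\e^{-2(\beta-H)m}.
\]
The right-hand side is summable in $m$. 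By the Borel--Cantelli lemma, a.s.\ $M(b_m)\le\varepsilon$ for all sufficiently large $m$.

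For $b\in(b_{m+1},b_m]$, monotonicity gives $M(b)\le M(b_m)$, so $\limsup_{b\to0^+}M(b)\le\varepsilon$ a.s. Intersecting over $\varepsilon=1/j$, $j\in\N$, yields $M(b)\to0$ a.s. as $b\to0^+$. This convergence implies
\[
	\sup_{t\in[t_0,t_1]} h^{-H}|\mathcal{E}_\alpha(t\,,h)|\le M(h)\to0 \qquad\text{as } h\to0^+.
\]
By \eqref{error}, this is exactly the asserted statement.

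The only point needing care is measurability of the supremum over uncountably many $(t\,,h)$. We would handle it by working with a continuous modification: $D_\alpha$ has one by Lemmas \ref{lem:D-D:t} and \ref{lem:D-D:eps} together with Kolmogorov's criterion, and $B$ is continuous. The supremum can then be restricted to a countable dense set, exactly as is implicit in Lemma \ref{lem:supE}.
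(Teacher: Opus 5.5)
Your proposal is correct and is essentially the paper's argument, which the paper compresses into ``Lemma \ref{lem:supE} and the Borel--Cantelli lemma readily imply''; your Chebyshev bound along the scales $b_m=\e^{-m}$, the Borel--Cantelli step and monotonicity of $M(b)$ in $b$ simply spell that out. Your remark about passing to a continuous modification, so that the suprema are measurable, is a sensible detail that the paper leaves implicit.
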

%

\section{Proof of Theorem \ref{th:main}}
From here on out, define
\[
	\tilde{D}_\alpha(t\,,h) = h^{-H} D_\alpha(t\,,h)
	\qquad\forall t,h>0,\ \alpha\in(0\,,1).
\]
We begin by establishing the following analogue of the boundary-crossing estimate \eqref{BC}
for the localized process $D_\alpha$; see \eqref{D_alpha}.

\begin{lemma}\label{lem:BC:D}
	Choose and fix $\alpha \in (0\,,1)$, $t_1 > t_0 > 0$,
	$\theta>0$, and $q>1$. Then,
	\[
		\sup_{t \in [t_0\,,t_1]}
		\P\left\{ | \tilde{D}_\alpha(t\,,h)| \le \theta\ \forall h \in [a\,,b]\right\} =
		(a/b)^{\lambda(\theta) + \mathscr{o}(1)}
	\]
	as $\max\{b\,,a/b\} \to 0^+$ subject to $b^q \le a \le b$.
\end{lemma}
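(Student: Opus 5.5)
The plan is to transfer the boundary-crossing estimate \eqref{BC} from $B$ to $D_\alpha$. This goes in three steps: self-similarity and stationarity of increments, a rescaling, and an error control based on Lemma \ref{lem:supE}, or rather on a scaled version of Lemma \ref{lem:E}.

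First, by stationarity of increments and $H$-self-similarity, for each fixed $t$ the process $h\mapsto h^{-H}(B(t+h)-B(t))$ has the same law as $h\mapsto h^{-H}B(h)$. Hence
\[
	\P\{|B(t+h)-B(t)|\le\theta h^H\ \forall h\in[a,b]\}=\P\{|B(s)|\le\theta s^H\ \forall s\in[a/b,1]\},
\]
and by \eqref{BC} this equals $(a/b)^{\lambda(\theta)+\mathscr{o}(1)}$ as $a/b\to0$, uniformly in $t$. This is because the probability does not depend on $t$ at all.

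Second, I would compare $\tilde D_\alpha$ with $h^{-H}(B(t+h)-B(t))$ on $[a,b]$ by sandwiching the events. For the upper bound I would use
\[
	\{|\tilde D_\alpha|\le\theta \text{ on }[a,b]\}\subset\{|h^{-H}(B(t+h)-B(t))|\le\theta+\varepsilon\text{ on }[a,b]\}\cup\{\textstyle\sup_{h\le b}h^{-H}|\mathcal E_\alpha(t,h)|>\varepsilon\}.
\]
The lower bound is symmetric, with $\theta-\varepsilon$. The first event has probability $(a/b)^{\lambda(\theta\pm\varepsilon)+\mathscr{o}(1)}$, and continuity of $\lambda$ lets $\varepsilon\downarrow0$ at the end.

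The third step, which is the main obstacle, is to show that the error probability is $\mathscr{o}((a/b)^{M})$ for every $M$. Since $a\ge b^q$, we have $a/b\ge b^{q-1}$, so it suffices to have decay faster than any power of $b$. Lemma \ref{lem:supE} (with $t$ fixed; the sup over $t$ only helps) gives $\|\sup_{h\le b}h^{-H}|\mathcal E_\alpha|\|_k\lesssim\sqrt k\,b^{\beta-H}|\log b|^{1/2}$ uniformly in $k$. Optimizing Chebyshev over $k$ then yields the Gaussian tail
\[
	\P\{\cdot>\varepsilon\}\le C\exp\bigl(-c\varepsilon^2 b^{-2(\beta-H)}/|\log b|\bigr),
\]
and this is super-polynomially small in $b$, hence in $a/b$ under the constraint $b^q\le a$. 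The constraint is used exactly here: it is what stops $a/b$ from being much smaller than a power of $b$. All constants are uniform over $t\in[t_0,t_1]$, which gives the $\sup_t$ form (indeed the same asymptotics hold uniformly in $t$).

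Finally, I would combine the pieces. For the upper bound, $P\le(a/b)^{\lambda(\theta+\varepsilon)+\mathscr{o}(1)}+\mathscr{o}((a/b)^{M})$ with $M>\lambda(\theta+\varepsilon)$. For the lower bound,
\[
	P\ge(a/b)^{\lambda(\theta-\varepsilon)+\mathscr{o}(1)}-\mathscr{o}((a/b)^{M})
\]
with $M>\lambda(\theta-\varepsilon)$, which needs $\varepsilon<\theta$. Letting $\varepsilon\to0$ and using continuity of $\lambda$ gives the exponent $\lambda(\theta)+\mathscr{o}(1)$. One caveat: $h\le t_0^{1/\alpha}$ is required in the definition of $D_\alpha$, but this is automatic because $b\to0$.
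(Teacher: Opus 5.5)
Your proposal is correct and is essentially the paper's proof. It uses the same $\pm\eta$ sandwich between $\tilde D_\alpha$ and $h^{-H}(B(t+h)-B(t))$, stationarity and self-similarity to reduce to \eqref{BC}, the tail bound $C\exp\bigl(-\eta^2/(Cb^{2(\beta-H)}|\log b|)\bigr)$ for the localization error from Lemma~\ref{lem:supE}, absorption of that term via $a\ge b^q$, and continuity of $\lambda$ to let $\eta\to0$. The only cosmetic difference is that you get the tail by optimizing Chebyshev over $k$, whereas the paper sums the moments into an exponential moment (Taylor expansion plus Stirling) before applying Chebyshev; the two are equivalent.
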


\begin{proof}
	Let $b_0 = t_0^{1/\alpha} \wedge \e^{-1}$.
	Thanks to the triangle inequality, we may write,
	for every $\theta, \eta > 0$, $t \in [t_0\,,t_1]$, and  $0<a<b< b_0$,
	\begin{align}\begin{split}\label{P:BC:D:ub}
		&\P\left\{ | \tilde{D}_\alpha(t\,,h)| \le \theta\ \forall h \in [a\,,b]\right\}\\
		&\hskip1in
			\le \P\left\{ |B(t+h)-B(t)| \le (\theta+\eta) h^H \ 
			\forall h \in [a\,,b]  \right\} + P_0,
	\end{split}\end{align}
	where
	$P_0 = \P\{ \sup_{t \in [t_0,t_1]} \sup_{h\in [a,b]} 
	h^{-H} |\mathcal{E}_\alpha(t\,,h)| > \eta \}$; see \eqref{error}.
	Elementary properties of fBm and \eqref{BC} together imply that
	\begin{equation}\label{inc1}\begin{aligned}
		&\P\left\{ |B(t+h)-B(t)| \le (\theta+\eta) h^H \ 
			\forall h \in [a\,,b]  \right\}\\
		&= \P\left\{ |B(h)| \le (\theta+\eta) h^H \ 
			\forall h \in [a/b\,,1]  \right\}
			=(a/b)^{\lambda(\theta+\eta)+\mathscr{o}(1)},
	\end{aligned}\end{equation}
	as $\max\{b\,,a/b\} \to 0^+$, 
	uniformly for all $t \in [t_0\,,t_1]$.
	It remains to estimate $P_0$.
	
	Thanks to the Taylor expansion of the exponential function, 
	Lemma \ref{lem:supE}, and Stirling's formula, there exists 
	$c>0$ such that
	\begin{align*}
		&\E\left[ \exp\left( \frac{c}{b^{2(\beta-H)}|\log b|} 
			\sup_{t\in [t_0,t_1]} \sup_{h\in(0,b]} \left|\frac{
			\mathcal{E}_\alpha(t\,,h)}{h^H}\right|^2 \right) \right]\\
		& \le \sum_{k=0}^\infty \frac{c^k}{k! (b^{2(\beta-H)} |\log b|)^k}
			\left\| \sup_{t\in[t_0,t_1]} \sup_{h\in(0,b]} \frac{|
			\mathcal{E}_\alpha(t\,,h)|}{h^H} \right\|_{2k}^{2k} < \infty,
	\end{align*}
	uniformly for all $b \in (0\,,b_0]$.
	Thus, the preceding and Chebyshev's inequality together 
	imply that there exists $C>0$ such that
	\begin{align*}
		P_0 \le C \exp\left( - \frac{\eta^2}{C b^{2(\beta-H)}|\log b|} \right),
	\end{align*}
	uniformly for all $\eta > 0$ and $b \in (0\,,b_0]$.
	Combine this with \eqref{inc1}
	and the fact that $\beta>H$ in order
	to deduce that, for every $\theta, \eta >0$ and $0<a<b<b_0$,
	\begin{align*}
		\P\left\{ | \tilde{D}_\alpha(t\,,h)| \le \theta\ \forall h\in [a\,,b] \right\}
			&\le (a/b)^{\lambda(\theta+\eta)+\mathscr{o}(1)} +
			C \exp\left( - \frac{\eta^2}{C b^{2(\beta-H)}|\log b|} \right)\\
		& = (a/b)^{\lambda(\theta+\eta)+\mathscr{o}(1)},
	\end{align*}
	as $\max\{b\,,a/b\} \to 0^+$, subject to
	$b^q \le a \le b$, all valid
	uniformly for all $t \in [t_0\,,t_1]$.
	Similarly, we may deduce the complementary bound: For any $\theta>\eta>0$ and $0<a<b<b_0$,
	\begin{align*}
		&\P\left\{ |\tilde{D}_\alpha(t\,,h)| \le \theta\ \forall h\in [a\,,b] \right\}
			\ge \P\left\{ |B(t+h) - B(t)| \le (\theta-\eta) \right\} - P_0\\
		&\ge (a/b)^{\lambda(\theta-\eta)+\mathscr{o}(1)} - 
			C \exp\left( - \frac{\eta^2}{C b^{2(\beta-H)}|\log b|} \right)
			= (a/b)^{\lambda(\theta-\eta)+\mathscr{o}(1)},
	\end{align*}
	as $\max\{b\,,a/b\} \to 0^+$, subject to
	$b^q \le a \le b$, all valid
	uniformly for all $t \in [t_0\,,t_1]$.
	This proves that for every $\theta>\eta>0$,
	\begin{align*}
		\lambda(\theta+\eta) + \mathscr{o}(1) \le 
		\frac{\log \P\left\{ |\tilde{D}_\alpha(t\,,h)| \le \theta\ 
		\forall h \in [a\,,b] \right\}}{\log(a/b)} \le \lambda(\theta-\eta) + \mathscr{o}(1)
	\end{align*}
	as $\max\{b\,,a/b\} \to 0^+$, subject to
	$b^q \le a \le b$, all valid
	uniformly for all $t \in [t_0\,,t_1]$.
	Since $\lambda$ is continuous, we may let $\eta \to 0^+$ to conclude the proof.
\end{proof}

Armed with the preceding technical results we can now begin to prove
Theorem \ref{th:main}. The proof is naturally divided in two parts: The upper bound;
and the lower bound. We discuss each separately and in turn.

\subsection{Proof of the upper bound}

\begin{proof}
	The proof is based on the second-moment method, and 
	adapts ideas of \cite{KL2} to the present setting.
	
	Choose and fix a compact set $K$ with $K \subset [t_0\,,t_1]$ for some $t_1>t_0>0$.
	Without loss of generality, we may and will assume that $\dimh{K}>0$.
	Choose and fix two numbers $\theta, \nu>0$ such that
	\begin{align}\label{theta}
		\lambda(\theta) < \nu < \dimh{K}.
	\end{align}
	Frostman's lemma \cite{Falconer} ensures that there exists a Borel probability measure $\mu$ on $K$ such that
	\begin{align}\label{sup:mu}
		\sup_{r>0} \sup_{t \in \R} r^{-\nu}  \mu([t-r\,,t+r]) <\infty.
	\end{align}
	Choose and fix a number 
	\begin{align}\label{alpha}
		\alpha\in\left( \lambda(\theta)/\nu~,~1\right).
	\end{align}
	For every $h_2\ge h_1>0$ and $t \in (0\,,\infty)$, define the event
	\begin{align}\label{event:A}
		A(h_1\,,h_2\,,t) = \left\{ \omega \in \Omega : \sup_{h \in [h_1,h_2]} 
		 | \tilde{D}_\alpha(t\,,h)| (\omega)  \le \theta\right\}.
	\end{align}
	By Lemma \ref{lem:BC:D},
	\begin{align}\label{P:BC}
		\P(A(h_1\,,h_2\,,t)) = (h_1/h_2)^{\lambda(\theta)+\mathscr{o}(1)} \ 
		\text{($\max\{ h_2\,,h_1/h_2\} \to 0^+$,\
		$h_2^q \le h_1 \le h_2$)},
	\end{align}
	valid uniformly for all $t \in [t_0\,,t_1]$.
	For every $h_2\ge h_1>0$, consider the random variable
	\[
		X_{h_1,h_2} = \int_K \frac{\1_{A(h_1,h_2,t)}}{\P(A(h_1\,,h_2\,,t))} \mu(\d t).
	\]
	Choose and fix some $q>1$,
	and define $\varepsilon_m = \exp(-q^m)$ for every $m \in \N$.
	We aim to show that $\sum_{m=1}^\infty 
	\P\{X_{\varepsilon_{m+1},\varepsilon_m}=0\} < \infty$ for a suitable choice of $q>1$.
	To this end, we compute
	\[
		\E(X_{h_1,h_2}^2) = \iint_{K \times K} 
		\frac{\P(A(h_1\,,h_2\,,t)\cap A(h_1\,,h_2\,,s))}{
		\P(A(h_1\,,h_2\,,t))\P(A(h_1\,,h_2\,,s))} \mu(\d t) \mu(\d s),
	\]
	and decompose it as
	$\E(X_{h_1,h_2}^2) = I_1(h_1\,,h_2)+I_2(h_1\,,h_2)+I_3(h_1\,,h_2)$,
	where
	\begin{align*}
		&I_1 = I_1(h_1\,,h_2)=\iint_{\substack{s,t \in K:\\|t-s| \le
			2h_1^\alpha}}
			\frac{\P(A(h_1\,,h_2\,,t)\cap A(h_1\,,h_2\,,s))}{
			\P(A(h_1\,,h_2\,,t))\P(A(h_1\,,h_2\,,s))} \mu(\d t) \mu(\d s),\\
		&I_2 = I_2(h_1\,,h_2)=\iint_{\substack{s,t \in K:\\ 
			2h_1^\alpha< |t-s| \le 2h_2^\alpha}} \frac{\P(A(h_1\,,h_2\,,t)\cap 
			A(h_1\,,h_2\,,s))}{\P(A(h_1\,,h_2\,,t))
			\P(A(h_1\,,h_2\,,s))} \mu(\d t) \mu(\d s).
	\end{align*}
	To estimate $I_1$, we use the trivial bound $\P(A_1\cap A_2) \le \P(A_1)$,
	valid for all events $A_1$ and $A_2$, together with \eqref{sup:mu} and \eqref{P:BC}.
	This yields
	\begin{align*}
		I_1 &\le \iint_{\substack{s,t \in K:\\|t-s| \le 2h_1^\alpha}}
			\frac{\mu(\d t) \mu(\d s)}{\P(A(h_1\,,h_2\,,t))} = 
			\iint_{\substack{s,t \in K:\\|t-s| \le 2h_1^\alpha}}
			\frac{\mu(\d t) \mu(\d s)}{(h_1/h_2)^{\lambda(\theta)+\mathscr{o}(1)}}\\
		&\lesssim \frac{h_1^{\alpha\nu}}{(h_1/h_2)^{\lambda(\theta)+\mathscr{o}(1)}} 
			= h_1^{\alpha\nu-\lambda(\theta)+\mathscr{o}(1)} 
			h_2^{\lambda(\theta)+\mathscr{o}(1)} 
	\end{align*}
	as $\max\{h_2\,,h_1/h_2\} \to 0^+$ via $h_2^q \le h_1 \le h_2$.
	Next, we observe that if $|t-s| > 2h_2^\alpha$, then
	$A(h_1\,,h_2\,,t)$ and $A(h_1\,,h_2\,,s)$ are independent events;
	see \eqref{D_alpha}. Therefore,
	$I_3 \le 1.$
	We now estimate $I_2$ as follows. 
	If $2h_1^\alpha< |t-s| \le 2 h_2^\alpha$, then
	\begin{align*}
		&\P\left\{ \sup_{h \in [h_1,h_2]} | \tilde{D}_\alpha(s\,,h)|
			\le \theta~~,\sup_{h \in [h_1,h_2]} |\tilde{D}_\alpha(s\,,h)|
			\le \theta \right\}\\
		& \le \P\left\{ \sup_{h \in [h_1,(|t-s|/2)^{1/\alpha}]} 
			|\tilde{D}_\alpha(s\,,h)| \le \theta~~,
			\sup_{h \in [h_1,(|t-s|/2)^{1/\alpha}]} 
			|\tilde{D}_\alpha(s\,,h)| \le \theta \right\}\\
		& = \P\left\{ \sup_{h \in [h_1,(|t-s|/2)^{1/\alpha}]}
			|\tilde{D}_\alpha(s\,,h)| \le \theta\right\}
			\P\left\{ \sup_{h \in [h_1,(|t-s|/2)^{1/\alpha}]}
			|\tilde{D}_\alpha(s\,,h)| \le \theta\right\},
	\end{align*}
	where the last line follows from independence of the two events.
	By the Gaussian correlation inequality of Royen \cite{Royen},
	\begin{align*}
		\P\left\{ \sup_{h \in [h_1,(|t-s|/2)^{1/\alpha}]}
			|\tilde{D}_\alpha(u\,,h)| \le \theta\right\}
		& \le \frac{\P\left\{ \sup_{h \in [h_1,h_2]} 
			|\tilde{D}_\alpha(u\,,h)| \le \theta \right\}}{
			\P\left\{ \sup_{h \in [(|t-s|/2)^{1/\alpha},h_2]} 
			|\tilde{D}_\alpha(u\,,h)| \le \theta \right\}},
	\end{align*}
	for $u\in\{t\,,s\}$.
	Because of \eqref{alpha}, it is possible to choose and fix $\delta>0$ such that
	\begin{align}\label{delta}
		(\lambda(\theta)+\delta)/\alpha< \nu.
	\end{align}
	Thanks to \eqref{P:BC} and \eqref{sup:mu}, the above implies that
	\begin{align*}
		I_2
		&\le \iint_{\substack{s,t \in K:\\ 2h_1^\alpha < |t-s| \le 2h_2^\alpha}} 
			\frac{\mu(\d t) \mu(\d s)}{\P(A((|t-s|/2)^{1/\alpha}\,,h_2\,,t))
			\P(A((|t-s|/2)^{1/\alpha}\,,h_2\,,s))}\\
		&\le \iint_{\substack{s,t \in K:\\ 2h_1^\alpha < |t-s| \le 2h_2^\alpha}} 
			\left( \frac{h_2}{(|t-s|/2)^{1/\alpha}} \right)^{
			2\lambda(\theta)+2\delta} \mu(\d t) \mu(\d s)\\
		&\lesssim h_2^{2\lambda(\theta)+2\delta} 
			h_1^{-2\lambda(\theta)-2\delta} h_2^{\alpha \nu} \qquad 
			\text{($\max\{h_2\,,h_1/h_2\} \to 0^+$, $h_2^q \le h_1 \le h_2$).}
	\end{align*}
	We now choose $q>1$ sufficiently close to 1 so that
	$\alpha\nu - (q-1)(2\lambda(\theta)+2\delta) > 0$,
	$\varepsilon_{m+1} = \varepsilon_m^q$, and 
	$\max\{\varepsilon_m\,,\varepsilon_{m+1}/\varepsilon_m\}\to 0^+$. Thanks to the above estimates, 
	\begin{align*}
		\E(X_{\varepsilon_{m+1},\varepsilon_m}^2) \le 1 + 
		\e^{-q^m(q\alpha\nu + (q+1)\lambda(\theta)+\mathscr{o}(1))} + 
		\e^{-q^m(\alpha\nu - (q-1)(2\lambda(\theta)+2\delta))},
	\end{align*}
	as $m\to\infty$.
	The Paley-Zygmund inequality, the fact that 
	$\E(X_{\varepsilon_{m+1},\varepsilon_m})=1$, and the preceding together imply that
	\begin{align*}
		\P\{X_{\varepsilon_{m+1},\varepsilon_m} &= 0\} 
			= 1 - \P\{X_{\varepsilon_{m+1},\varepsilon_m} > 0\} \le 1 - 
			\frac{[\E(X_{\varepsilon_{m+1},\varepsilon_m})]^2}{%
			\E(X_{\varepsilon_{m+1},\varepsilon_m}^2)}\\
		&\le \e^{-q^m(q\alpha\nu + (q+1)\lambda(\theta)+\mathscr{o}(1))}
			+ \e^{-q^m(\alpha\nu - (q-1)(2\lambda(\theta)+2\delta))}
			\quad \text{as $m \to \infty$.}
	\end{align*}
	Hence, $\sum_{m=1}^\infty \P\{X_{\varepsilon_{m+1},\varepsilon_m}=0\} < \infty$ 
	and the Borel-Cantelli lemma yields
	\[
		\P\Bigg( \bigcup_{n \in \N} \bigcap_{m \ge n} \left\{ 
		\exists\, t \in K, \sup_{h \in [\varepsilon_{m+1},\varepsilon_m]} 
		|\tilde{D}_\alpha(t\,,h)| \le \theta \right\}\Bigg) = 1.
	\]
	Then, by the continuity of $D_\alpha$ 
	-- see Lemma \ref{lem:E-E} and \eqref{error} --
	and a compactness argument as in \cite{KL2}, we deduce that
	\[
		\bigcup_{n \in \N} \bigcap_{N>n} \left\{ t \in K : 
		\sup_{h\in[\varepsilon_N,\varepsilon_n]} |\tilde{D}_\alpha(t\,,h)|
		 \le \theta\right\} 
		\ne \varnothing\qquad\text{a.s.}
	\]
	This, together with Proposition \ref{pr:E:as}, implies that, a.s.,
	\[
		\adjustlimits\inf_{t \in K} \limsup_{h\to0^+}\frac{|B(t+h)-B(t)|}{h^H} 
		= \adjustlimits\inf_{t \in K} \limsup_{h\to0^+} |
		\tilde{D}_\alpha(t\,,h)| \le \theta.
	\]
	The preceding is valid for every fixed $\theta,\nu>0$ that satisfy \eqref{theta}.
	We may let $\nu \uparrow \dimh K$ along rationals see that 
	$\inf_{t \in K} \limsup_{h\to0^+} h^{-H}|B(t+h)-B(t)| \le \theta$ for 
	every $\theta>0$ that satisfies $\lambda(\theta)< \dimh K$, off a $\P$-null set,
	Since $\lambda$ is strictly decreasing, we may take infimum over all such 
	$\theta$ to conclude the upper bound for the $\limsup$ in \eqref{main}.
\end{proof}

\subsection{Proof of the lower bound}

We follow the proof of the lower bound in \cite{KL2} but make suitable
non-trivial changes that we report next.
Choose and fix a compact set $K$ with $K \subset[t_0\,,t_1]$ for some fixed numbers $t_1>t_0>0$. 
Choose and fix numbers $\nu,p,\theta,\eta>0$ such that
\begin{align}\label{theta:eta}
	\nu > \dimmm K, \quad p > 1, \quad \text{and} \quad
	\lambda(\theta+\eta) > p\nu.
\end{align}
Recall that
$\dimmm K = \liminf_{h\to0^+}\log \mathscr{P}_h(K)/|\log h|,$
where $\mathscr{P}_h(K)$ denotes the cardinality of the smallest $h$-packing of $K$, and a finite set $L$ is called an $h$-packing of $K$ if every $x \in K$ is within $h$ of some $y \in L$.
Since $\nu>\dimmm K$, there is a positive sequence $\delta_1>\delta_2>\dots$ tending to 0 such that
\begin{align}\label{packing}
	\mathscr{P}_{\delta_m}(K) \le \delta_m^{-\nu} \qquad \forall m \in \N.
\end{align}
Due to \eqref{theta:eta}, we may choose and fix $\alpha \in (0\,,1)$, sufficiently close to $1$, such that
\begin{align}\label{alpha2}
	p/2 - H + \alpha(H-1/2) > 0
\end{align}
as well as a large-enough $\mu>1$ such that
\begin{align}\label{mu}
	\mu(\lambda(\theta+\eta)-p\nu)
		>\lambda(\theta+\eta) + \alpha.
\end{align}
We extract a subsequence $\{\tilde{\delta}_n\}_{n \in \N}$ of $\{\delta_m\}_{m\in \N}$ such that
$h_n := \tilde{\delta}_n^{1/p}$ $\forall n \in \N$
satisfies
\begin{align}\label{eps}
	h_{n} \le \exp(-\mu^n) \quad \text{and} \quad
	h_{n+1} \le h_n^\mu \qquad \forall n \in \N.
\end{align}
Thanks to \eqref{packing}, for every $n \in \N$, there exists a finite subset $F_n \subset K$ such that the following properties hold:
\begin{enumerate}[\bf Property A.]
	\item For every $x \in K$, there exists $y \in F_n$ such that 
		$|x-y| \le h_{n+1}^p$;	\label{pack}
	\item Whenever $y\,,z \in F_n$ are distinct, 
		$|y-z| \ge 2h_{n+1}^p$; and
	\item $|F_n| \asymp h_{n+1}^{-p\nu}$, 
		where $|\cdot|$ denotes cardinality.
	\label{|F|}
\end{enumerate}
Next, we consider a uniform partition of the interval $[t_0\,,t_1]$ as follows:
\[
	[t_0\,,t_1] = [t_{n,1}\,, t_{n,2}] \cup [t_{n,2}\,,t_{n,3}] \cup \cdots \cup [t_{n,N}\,,t_{n,N+1}],
\]
where
\begin{align}\label{t_ni}
	t_{n,i+1}-t_{n,i} \ge 2 h_n^{\alpha} \quad \text{and} \quad
	N=N(n) \asymp h_n^{-\alpha}.
\end{align}
For every $1\le i \le N$,  define
$K_i = K_{n,i} = K \cap [t_{n,i}\,,t_{n,i+1}]$.
By Property \ref{|F|}, 
\begin{align}\label{FnK}
	|F_n \cap K_i| \lesssim h_{n+1}^{-p\nu} \qquad (1 \le i \le N).
\end{align}
We may write $K = I \cup J$, where
$I = K_1 \cup K_3 \cup \dots$, and
$J = K_2 \cup K_4 \cup \dots.$
Then, by Boole's inequality,
\begin{align*}
	&\P\left\{ \adjustlimits\inf_{t \in K} \sup_{h\in[h_{n+1},h_n]} 
		|\tilde{D}_\alpha(t\,,h)| \le \theta \right\}\\
	&\le \P\left\{ \adjustlimits\inf_{t \in I} \sup_{h\in[h_{n+1},h_n]} 
		|\tilde{D}_\alpha(t\,,h)| \le \theta \right\} + 
		\P\left\{ \adjustlimits\inf_{t \in J} \sup_{h\in[h_{n+1},h_n]} 
		|\tilde{D}_\alpha(t\,,h)| \le \theta \right\}.
\end{align*}
Because $\{D_\alpha(t\,,h)\}_{t \in K_i, h\in[h_{n+1},h_n]}$
and $\{D_\alpha(t\,,h)\}_{t \in K_j, h\in[h_{n+1},h_n]}$
are independent whenever $|i-j| \ge 2$.,
\begin{align*}
	&\P\left\{ \adjustlimits\inf_{t \in I} \sup_{h\in[h_{n+1},h_n]}
		|\tilde{D}_\alpha(t\,,h)| \le \theta \right\}
		= 1 - \P\left\{ \adjustlimits\inf_{t \in I} \sup_{h\in[h_{n+1},h_n]} 
		|\tilde{D}_\alpha(t\,,h)| > \theta \right\}\\
	& = 1 - \prod_{i : K_i \subset I} \P\left\{ \adjustlimits\inf_{t \in K_i} 
		\sup_{h\in[h_{n+1},h_n]} |\tilde{D}_\alpha(t\,,h)| > \theta \right\}\\
	& = 1 - \prod_{i : K_i \subset I} \left[ 1 - \P\left\{ \adjustlimits\inf_{t \in K_i} 
		\sup_{h\in[h_{n+1},h_n]} |\tilde{D}_\alpha(t\,,h)|
		\le \theta \right\} \right].
\end{align*}
Fix an index $i \in \N$ such that $K_i \subset I$.
We may use Property \ref{pack} and interpolation to write
\begin{align*}
	\P\left\{ \adjustlimits\inf_{t \in K_i} \sup_{h\in[h_{n+1},h_n]} 
		|\tilde{D}_\alpha(t\,,h)| \le \theta \right\} 
	\le \P\left\{ \adjustlimits\min_{s \in F_n \cap K_i} 
	\sup_{h\in[h_{n+1},h_n]} |\tilde{D}_\alpha(s\,,h)| \le \theta + \eta \right\}\\
	+ \P \left\{ \sup_{\substack{s,t\in [t_0,t_1]:\\ |s-t|\le 
	h_{n+1}^p}}\sup_{h\in [h_{n+1},h_n]} 
	|\tilde{D}_\alpha(t\,,h)-\tilde{D}_\alpha(s\,,h)| > \eta\right\}.
\end{align*}
We estimate the last two probabilities in turn.
First, by \eqref{FnK} and Lemma \ref{lem:BC:D}, 
\begin{align*}
	&\P\left\{ \adjustlimits\min_{s \in F_n \cap K_i} 
		\sup_{h\in[h_{n+1},h_n]} |\tilde{D}_\alpha(s\,,h)|
		\le \theta + \eta \right\}\\
	&\hskip1in\lesssim h_{n+1}^{-p\nu} (h_{n+1}/h_n)^{\lambda(\theta+\eta)+
		\mathscr{o}(1)} = h_{n+1}^{\lambda(\theta+\eta)-p\nu+
		\mathscr{o}(1)} h_n^{-\lambda(\theta+\eta)+\mathscr{o}(1)},
\end{align*}
as $n \to \infty$. 
Next, define
$d((t\,,h)\,,(t',h')) := \|D_\alpha(t\,,h)-D_\alpha(t',h')\|_2.$
By Lemmas \ref{lem:D-D:t} and \ref{lem:D-D:eps},
\begin{align}\label{D-D}
	d((t\,,h)\,,(t',h')) \lesssim |t-t'|^{H \wedge \frac12} + |h-h'|^{\alpha(H\wedge \frac12)},
\end{align}
uniformly for all $t, t' \in [t_0\,,t_1]$ and $h, h' \in (0\,,t_0^{1/\alpha}]$.
Choose and fix $\gamma \in (0\,,1)$, sufficiently close to $1$, such that
\begin{align}\label{gamma}
	\delta:=\begin{cases}
	(\gamma p - 1)H > 0, & \text{if $H \in (0\,,\frac12]$,}\\
	\gamma p/2 - H + \gamma \alpha(H-1/2) > 0, & \text{if $H \in (\frac12\,,1)$,}
	\end{cases}
\end{align}
which is possible for both cases due to $p>1$ and \eqref{alpha2}, respectively.
Then, \eqref{D-D} together with a standard metric entropy argument yields a constant $c>0$ such that
\begin{align}\label{Eexp:D}
	\E\left[ \exp\left( c \sup_{(t,h)\ne(t',h') \in [t_0,t_1] \times (0,t_0^{1/\alpha}]} \left|\frac{D_\alpha(t\,,h)-D_\alpha(t',h')}{d^\gamma((t\,,h)\,,(t',h'))}\right|^2 \right) \right]<\infty.
\end{align}
Moreover, it follows from Lemma \ref{lem:D-D:t} that
\begin{align}\label{d:ub}
	d((t\,,h)\,,(s\,,h)) \lesssim \begin{cases}
	|t-s|^H & \text{if $H \in (0\,,\frac12]$,}\\
	h^{\alpha(H-1/2)} |t-s|^{1/2} & \text{if $H \in (\frac12\,,1)$,}
	\end{cases}
\end{align}
uniformly for all $t, s \in [t_0\,,t_1]$ with $|t-s|\le h_{n+1}^p$, for 
all $h \in [h_{n+1}\,,h_n]$, and for all $n \in \N$.
Hence, \eqref{gamma}-\eqref{d:ub} and Chebyshev's inequality 
together imply that there exists a constant $c_0>0$ such that the following holds: If $H \in (0\,,\frac12]$, then
\begin{align*}
	&\mathscr{P}_n:= \P \left\{ \sup_{\substack{s,t\in [t_0,t_1]:\\ |s-t|\le h_{n+1}^p}}
		\sup_{h\in [h_{n+1},h_n]} |\tilde{D}_\alpha(t\,,h)-
		\tilde{D}_\alpha(s\,,h)| > \eta\right\}\\
	& \le \P \left\{ \sup_{\substack{s,t\in [t_0,t_1]:\\ |s-t|\le 
		h_{n+1}^p}}\sup_{h\in [h_{n+1},h_n]} \frac{
		|D_\alpha(t\,,h)-D_\alpha(s\,,h)|}{d^\gamma((t\,,h)\,,(s\,,h))} 
		> \frac{\eta h_{n+1}^{H}}{c_0h_{n+1}^{\gamma p H}}\right\}
		\lesssim \e^{-\eta^2/c_0^2 h_{n+1}^{2\delta}},
\end{align*}
uniformly for all $n \in \N$; and if $H \in (\frac12\,,1)$, then
\begin{align*}
	&\mathscr{P}_n
		\le \P \left\{ \sup_{\substack{s,t\in [t_0,t_1]:\\ |s-t|\le h_{n+1}^p}}
		\sup_{h\in [h_{n+1},h_n]} \frac{|D_\alpha(t\,,h)-D_\alpha(s\,,h)|}{
		h^{\gamma\alpha(H-1/2)}} > \eta h_{n+1}^{H-\gamma\alpha(H-1/2)}\right\}\\
	& \le \P \left\{ \sup_{\substack{s,t\in [t_0,t_1]:\\ |s-t|\le h_{n+1}^p}}
		\sup_{h\in [h_{n+1},h_n]} \frac{|D_\alpha(t\,,h)-D_\alpha(s\,,h)|}{
		d^\gamma((t\,,h)\,,(s\,,h))} > \frac{\eta h_{n+1}^{H-\gamma
		\alpha(H-1/2)}}{c_0h_{n+1}^{\gamma p/2}}\right\}
		\lesssim \e^{- \eta^2/c_0^2 h_{n+1}^{2\delta}},
\end{align*}
uniformly for all $n \in \N$. It follows by combining the above estimates that
\begin{align*}
	&\P\left\{ \adjustlimits\inf_{t \in K_i} \sup_{h\in[h_{n+1},h_n]}
		|\tilde{D}_\alpha(t\,,h)| \le \theta \right\} \\
	&\lesssim h_{n+1}^{\lambda(\theta+\eta)-p\nu+\mathscr{o}(1)} 
		h_n^{-\lambda(\theta+\eta)+\mathscr{o}(1)} + 
		\exp\left( - \frac{\eta^2}{c_0^2 h_{n+1}^{2\delta}} \right) \quad \text{as $n \to \infty$.}
\end{align*}
The preceding estimate is valid for every index $i\in\N$ that satisfies either
$K_i \subset I$ or $K_i \subset J$.
Therefore, we may apply the above estimates, as well as \eqref{t_ni}, 
in order to see that there exists a constant $C>0$ such that, as $n\to\infty$,
\begin{align*}
	&\P\left\{\adjustlimits
		\inf_{t \in K} \sup_{h \in [h_{n+1},h_n]}
		|\tilde{D}_\alpha(t\,,h)| \le \theta \right\}\\
	& \le 2 \left[ 1 - \left( 1 - C 
		h_{n+1}^{\lambda(\theta+\eta)-p\nu
		+ \mathscr{o}(1)} h_n^{-\lambda(\theta+\eta) + \mathscr{o}(1)}
	+ C\exp\left\{- \frac{\eta^2 }{Ch_{n+1}^{2\delta}}\right\}
		\right)^{C h_n^{-\alpha}} \right].
\end{align*}
It is possible to show that, thanks \eqref{mu} and \eqref{eps}, 
$h_{n+1}^{\lambda(\theta+\eta)-p\nu 
+ \mathscr{o}(1)} h_n^{-\lambda(\theta+\eta) + \mathscr{o}(1)}
=\mathscr{o}(1)$ as $n\to\infty$.
Therefore, the elementary inequality $\exp(-z) \le 1-(z/2)$,
valid for all $z \in [0\,,1]$, together with \eqref{mu} and \eqref{eps}, yields
\begin{align*}
	&\P\left\{ \adjustlimits
		\inf_{t \in K} \sup_{h \in [h_{n+1},h_n]}
		|\tilde{D}_\alpha(t\,,h)| \le \theta \right\}\\
	&\le2\left[ 1 - \exp\left\{ -2C^2 h_n^{-\alpha}
		\left(h_{n}^{\mu(\lambda(\theta+\eta)-
		p\nu)-\lambda(\theta+\eta)
		+\mathscr{o}(1)} + \exp\left[-\frac{\eta^2}{Ch_{n+1}^{2\delta}}\right]
		\right)\right\} \right]\\
	&\le 2\left[ 1 - \exp\left\{ -2C^2 \left(\e^{ - \mu^n 
		[ \mu ( \lambda(\theta+\eta) - p\nu ) -\alpha - 
		\lambda(\theta+\eta) + \mathscr{o}(1) ]} + 
		h_n^{-\alpha} 
		\exp\left[-\frac{\eta^2}{Ch_{n+1}^{2\delta}}\right]
		\right)\right\} \right]\\
	&=\mathscr{o}(1) \quad \text{as $n\to\infty$.}
\end{align*}
Let $A_n$ denote the complement of the event in the first line of the last display.
We have shown that $\liminf_{n\to\infty}\P(A_n^c)=0$.
Since $\P\{A_n\text{\ \rm i.o.}\} \ge 1-\liminf_{n\to\infty}\P(A_n^c),$
it follows that, a.s., infinitely many of the events $A_n$ must occur. 
Thus, we can deduce from Proposition \ref{pr:E:as} that, a.s.,
\begin{align}\label{E:inflim:LB}
	\adjustlimits\inf_{t\in K}\limsup_{h \to 0^+} h^{-H} |B(t+h)-B(t)| 
	= \adjustlimits\inf_{t\in K}\limsup_{h \to 0^+} 
	|\tilde{D}_\alpha(t\,,h)| \ge \theta.
\end{align}
Recall that $\theta,\eta,p,\nu$ satisfy \eqref{theta:eta}.
Because $\lambda$ is strictly decreasing and continuous, 
we can let $\nu \downarrow \dimmm K$, $p \downarrow 1$, and $\eta \downarrow 0$ in order
to see that
\eqref{E:inflim:LB} holds for all $\theta>0$ such that $\lambda(\theta)> \dimmm K$.
Take supremum over all such $\theta$ and use monotonicity of $\lambda$ in order to find that
$\inf_{t\in K}\limsup_{h \to 0^+} 
h^{-H} |B(t+h)-B(t)| \ge \lambda^{-1}(\dimmm K)$ a.s.,
where we define $\lambda^{-1}(0)=\infty$.
This completes the proof of the lower bound for the $\limsup$ in \eqref{main} and hence the proof of Theorem \ref{th:main}.
\qed

\bibliographystyle{amsplain}
\bibliography{fbm}

\end{document}